\newcommand{\papertitle}{An entropy-based bound for the computational complexity of a s\witch{} system}
\DeclareMathOperator{\newdiff}{d} 
\newcommand{\dif}{\newdiff\!}
\newcommand{\vertiii}[1]{{\left\vert\kern-0.25ex\left\vert\kern-0.25ex\left\vert #1
    \right\vert\kern-0.25ex\right\vert\kern-0.25ex\right\vert}}
\newcommand{\Sect}{Section~}
\newcommand{\eqdef}{\triangleq}
\newcommand{\ktup}[1]{$#1$-tuple}
\newcommand{\ktups}[1]{$#1$-tuples}
\newcommand{\opt}[1]{{#1}^\star}
\newcommand{\R}{\mathbb{R}}
\newcommand{\Rh}{\R_{\mathbf{2d}}}
\newcommand{\pmu}{\widetilde{\mu}}
\newcommand{\Tr}{\top}
\newcommand{\Nodes}{V}
\newcommand{\nodes}{nodes}
\newcommand{\Arcs}{E}
\newcommand{\Arcsin}[1][\Arcs]{{#1}^-}
\newcommand{\arc}{edge}
\newcommand{\arcs}{edges}
\newcommand{\secref}[1]{Section~\ref{sec:#1}}
\newcommand{\figref}[1]{Figure~\ref{fig:#1}}
\newcommand{\gamub}{\overline{\gamma}}
\newcommand{\gamubopt}{\opt{\gamub}}
\newcommand{\A}{\mathcal{A}}
\newcommand{\G}{G}
\newcommand{\jsra}{\rho}
\newcommand{\jsr}{\jsra(\A)}
\newcommand{\ujsrsos}[1][2d]{\jsra_{\text{SOS-}#1}(\A)}
\newcommand{\jsrsossub}[3]{\jsra_{\text{SOS-}#1}(#2,#3)}
\newcommand{\jsrsos}[1][2d]{\jsrsossub{#1}{\G}{\A}}
\newcommand{\cpr}[1][p]{\jsra_{#1}(\G,\A)}
\newcommand{\cdr}{\cpr[2d]}
\newcommand{\cjsr}[1][\G]{\jsra(#1,\A)}
\newcommand{\cjsrk}[1][k]{\hat{\jsra}_{#1}(\G,\A,\|\cdot\|)}
\newcommand{\cjsrp}{\jsra(\G',\A')}
\newcommand{\cjsrkp}[1][k]{\hat{\jsra}_{#1}(\G',\A',\|\cdot\|)}
\newcommand{\witch}{witched}
\newcommand{\iterc}{\tau}
\newcommand{\defAG   }{a finite set of matrices $\A$ constrained by an automaton $\G$}
\newcommand{\defAGn  }{a finite set of matrices $\A \subset \mathbb{R}^{n \times n}$ constrained by an automaton $\G$}
\newcommand{\defAGe  }{a finite set of matrices $\A$ constrained by an automaton $\G(\Nodes,\Arcs)$}
\newcommand{\PrimalFig}[3][0.18]{
  $2d = #2$
  & \NodeFig{#3}{1}{#1}
  & \NodeFig{#3}{2}{#1}
  & \NodeFig{#3}{3}{#1}
  & \NodeFig{#3}{4}{#1}\\
}
\newcommand{\NodeFig}[3]{
  \includegraphics[trim=4.4cm 0 3.1cm 0, clip, width=#3\textwidth]{PEDJ_d#1_v#2.png}
}
\theoremstyle{definition}
\newtheorem{myprop}{Proposition}
\newcommand{\propref}[1]{Proposition~\ref{prop:#1}}
\newtheorem{mytheo}{Theorem}
\newcommand{\theoref}[1]{Theorem~\ref{theo:#1}}
\newtheorem{mylem}{Lemma}
\newcommand{\lemref}[1]{Lemma~\ref{lem:#1}}
\newtheorem{mycoro}{Corollary}
\newcommand{\cororef}[1]{Corollary~\ref{coro:#1}}
\newtheorem{mydef}{Definition}
\newtheorem{myexem}{Example}
\newcommand{\exemref}[1]{Example~\ref{exem:#1}}
\newtheorem{myrem}{Remark}
\newtheorem{myprog}{Program}
\newcommand{\progref}[1]{Program~\ref{prog:#1}}
\newcommand{\Paths}{\G_k}
\newcommand{\expfrac}[2]{#1/#2}
\begin{document}


\title{\papertitle}

\author{Beno\^it~Legat,
        Pablo~A.~Parrilo,
        and~Rapha\"el~M.~Jungers,
\IEEEcompsocitemizethanks{\IEEEcompsocthanksitem
\IEEEcompsocthanksitem B. Legat and R. M. Jungers are with the ICTEAM, Universit\'e catholique de Louvain
(e-mail: \texttt{benoit.legat@uclouvain.be}; \texttt{raphael.jungers@uclouvain.be}).
\IEEEcompsocthanksitem P. A. Parrilo is with the Laboratory for Information and Decision Systems, Massachusetts Institute of Technology
(e-mail: \texttt{parrilo@mit.edu}).
}
}

\IEEEtitleabstractindextext{%
\begin{abstract}
    The joint spectral radius (JSR) of a set of matrices characterizes the
    maximal asymptotic growth rate of an infinite product of matrices of
    the set. This quantity appears in a number of applications including
    the stability of s\witch{} and hybrid systems. A popular method used for
    the stability analysis of these systems searches for a Lyapunov function with convex optimization tools.

    We analyse the accuracy of this method for \emph{constrained s\witch{} systems},
    a class of systems that has attracted increasing attention recently.
    We provide a new guarantee for the upper bound provided by the sum of squares implementation of the method.
    This guarantee relies on the $p$-radius of the system and the entropy of the language of allowed switching sequences.

    We end this paper with a method to reduce the computation of the JSR of
    low rank matrices to the computation of the constrained JSR of matrices
    of small dimension.
\end{abstract}

\begin{IEEEkeywords}
    Joint spectral radius, Language Entropy,
    Sum of squares programming, S\witch{} Systems,
    Path-complete Lyapunov functions
\end{IEEEkeywords}}

\maketitle

\IEEEdisplaynontitleabstractindextext

%
\IEEEpeerreviewmaketitle

\ifCLASSOPTIONcompsoc
\IEEEraisesectionheading{\section{Introduction}\label{sec:introduction}}
\else
\section{Introduction}
\label{sec:introduction}
\fi

\label{sec:intro}

%
%
%
%

\IEEEPARstart{I}{n} recent years, the study of the stability of hybrid systems
has been the subject of extensive research using methods based on classical ideas from Lyapunov theory
and modern mathematical optimization techniques.
Even for s\witch{} linear systems, arguably the simplest class of hybrid systems,
determining stability is undecidable and approximating the maximal asymptotic growth rate that a trajectory can have is NP-hard \cite{blondel2000boundedness}.
Despite these negative results, the vast range of applications has motivated a wealth of algorithms to approximate
this maximal asymptotic growth rate.

A s\witch{} linear system is characterized by a finite set of matrices $\A \eqdef \{A_1, A_2,\ldots, A_m\} \subset \mathbb{R}^{n \times n}$ and the iteration
\begin{equation}
  \label{eq:switchsys}
  x_k = A_{\sigma_k} x_{k-1}, \quad \sigma_k \in [m]
\end{equation}
where $[m]$ denotes the set $\{1, 2, \ldots, m\}$.

The maximal asymptotic growth rate of this iteration is given by the \emph{joint spectral radius} (JSR).
The JSR $\jsr$ of a finite set of matrices $\A$
is defined as
\[ \jsr = \lim_{k \to \infty} \max_{\sigma \in [m]^k} \|A_{\sigma_k} \cdots A_{\sigma_2} A_{\sigma_1}\|^{1/k}. \]
This definition is independent of the norm used.

The JSR was introduced by Rota and Strang~\cite{rota1960note}
and has many applications such as co-simulation~\cite{gomes2017stable}, wavelets, the capacity of some particular codes, zero-order stability of ordinary differential equations, congestion control in computer networks, curve design and networked and delayed control systems; see \cite{jungers2009joint} for a survey on the JSR and its applications.

In some applications the values that $\sigma_k$ can take in \eqref{eq:switchsys} may depend on $\sigma_{k-1}, \sigma_{k-2},\ldots$.
These constraints are often conveniently represented using a \emph{finite automaton} and the JSR under such constraints
is called \emph{constrained joint spectral radius} (CJSR) \cite{dai2012gel}; an example of constrained s\witch{} system is given by \exemref{run1} and its automaton is illustrated by \figref{run}.
Constrained switched systems are used in a variety of applications including
networked control \cite{brockett2000quantized, zhang2005new} and 
coordination of a network of autonomous agents \cite{jadbabaie2003coordination}. 
Moreover, even if a s\witch{} system is \emph{un}constrained,
studying an associated \emph{constrained} system generated by \emph{path-complete} methods
enhance our ability to analyze the stability~\cite{ahmadi2014joint}
or stabilize~\cite{gomes2018minimally} the original \emph{un}constrained switched system.

The automaton representing the constraints can be represented by a
strongly connected labelled directed graph $\G(\Nodes,\Arcs)$,
possibly with parallel \arcs{}.  The labels are elements of the set
$[m]$ and $\Arcs$ is a subset of $\Nodes \times \Nodes
\times [m]$.  We say that $(u,v,\sigma) \in \Arcs$ if
there is an \arc{} between node $u$ and node $v$ with label $\sigma$.

We use $\Arcs_k$ to denote the subset of $\Arcs^k$ (i.e. the $k$th cartesian power of $\Arcs$) that represents valid paths of length $k$.
The \ktup{k} $(\sigma_1, \sigma_2, \ldots, \sigma_k)$ is said to be $\G$-\emph{admissible} if $\sigma_1, \ldots, \sigma_k$ are the respective labels of a path of length $k$.
We denote the set of all \ktups{k} of $[m]^k$ that are $\G$-admissible as $\G_k$.
The matrix product $A_{\sigma_k}\cdots A_{\sigma_1}$ is written $A_s$ when $s = (\sigma_1, \ldots, \sigma_k)$ or $s$ is a path with these respective labels.

The iteration~\eqref{eq:switchsys} is rewritten as follows to take the automaton into account:
\begin{equation*}
  \label{eq:cswitchsys}
  x_k = A_{\sigma_k} x_{k-1}, \quad (\sigma_1, \ldots, \sigma_k) \in \G_k.
\end{equation*}

The definition of the JSR is generalized as follows for constrained systems.
\begin{mydef}[\cite{dai2012gel}]
  \label{def:cjsr}
  The \emph{constrained joint spectral radius} (CJSR) of
  \defAG{}, denoted as $\cjsr$,
  is
  \begin{equation*}
    \label{eq:jsrtheo}
    \cjsr = \lim_{k \to \infty} \cjsrk,
  \end{equation*}
  where
  \begin{equation}
    \label{eq:cjsrk}
    \cjsrk = \max_{s \in \G_k} \|A_s\|^{1/k}.
  \end{equation}
\end{mydef}

The \emph{arbitrary switching} case~\eqref{eq:switchsys}
can be seen as the particular case when the automaton has only one node and $m$ self-loops with labels $1, \ldots, m$.


\begin{myexem}[Running example]
  \label{exem:run1}
  We borrow the example of \cite[\Sect4]{philippe2016stability}.
  It is based on a state-feedback control that might undergo dropouts in its state feedback.
  The set of matrices $\A$ is composed of the following four matrices
  \begin{align*}
    A_1 & = A+B
    \begin{pmatrix}
      k_1 & k_2
    \end{pmatrix},&
    A_2 & = A+B
    \begin{pmatrix}
      0 & k_2
    \end{pmatrix},\\
    A_3 & = A+B
    \begin{pmatrix}
      k_1 & 0
    \end{pmatrix},&
    A_4 & = A.
  \end{align*}
  where $k_1 = -0.49$, $k_2 = 0.27$,
  \[
    A =
    \begin{pmatrix}
      0.94 & 0.56\\
      0.14 & 0.46
    \end{pmatrix} \text{ and }
    B =
    \begin{pmatrix}
      0\\
      1
    \end{pmatrix}.
  \]

  The corresponding automaton is represented by Figure~\ref{fig:run}.

  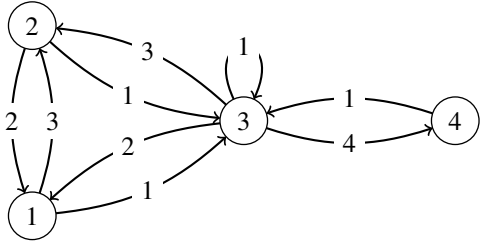
\begin{figure}[!ht]
    \centering
    \begin{tikzpicture}[x=40,y=18]
       \Vertex{1}
       \NO[unit=4](1){2}
       \SOEA[unit=2](2){3}
       \EA[unit=2](3){4}
       \tikzset{EdgeStyle/.style = {->}}
       \tikzset{LoopStyle/.style = {->}}
       \tikzset{EdgeStyle/.append style = {bend right=18}}
       \Edge[label=1](1)(3)
       \Edge[label=3](1)(2)
       \Edge[label=2](2)(1)
       \Edge[label=1](2)(3)
       \Edge[label=2](3)(1)
       \Edge[label=3](3)(2)
       \Edge[label=4](3)(4)
       \draw[thick,->] (3) to [out=115,in=65,looseness=10] node [midway, fill=white] {1} (3);
       \Edge[label=1](4)(3)
     \end{tikzpicture}
     \caption{Automaton for the running example.
     The numbers on the \arcs{} are their respective labels.
     }
     \label{fig:run}
  \end{figure}
\end{myexem}

Approximating the CJSR usually consists in certifying upper bounds $\gamub$ to
the CJSR by exhibiting Lyapunov functions or invariant sets for the matrices
$A_i/\gamub$ (see \secref{entropy} for precise definitions).
The search for such Lyapunov functions can naturally be written as a convex
optimization program using sum of squares (SOS) programming \cite{parrilo2008approximation}.
It turns out that these Lyapunov methods cannot produce an arbitrarily bad CJSR approximation:
bounds are known on the accuracy of the estimate they deliver.
Indeed, the following two bounds have been proved in the \emph{unconstrained} case
for the lowest upper bound $\gamub$ that can be certified using sum of squares polynomials%
\footnote{A polynomial $p(x)$ is a \emph{sum of squares} if there exists some
natural number $k$ and $k$ polynomials $q_i(x)$ such that
$p(x) = q_1^2(x) + \cdots + q_k^2(x)$.} of degree $2d$, denoted $\ujsrsos$:
\begin{align}
  \label{eq:nguarantee}
  \ujsrsos & \leq {n+d-1 \choose d}^{\frac{1}{2d}} \jsr\\
  \label{eq:mguarantee}
  \ujsrsos & \leq m^{\frac{1}{2d}} \jsr.
\end{align}
The two guarantees are incomparable,
as \eqref{eq:nguarantee} depends on the dimension,
and \eqref{eq:mguarantee} depends on the number of matrices.
However, only \eqref{eq:nguarantee} has been generalized in the constrained case yet; see \theoref{matt}.
Our main result is a generalization of the second guarantee: we relate the accuracy
of the SOS-based approximation algorithm with the combinatorial complexity of the automaton.
This complexity is measured by the \emph{entropy} of the language of allowed switching signals.
This new estimate of the accuracy of the SOS technique is always better than the previously existing one for sufficiently large sum of squares degree.
According to the new estimate, the more constrained the system is, the smaller the entropy is and the better the accuracy of the method is.
This shows that, in some sense, it is easier to analyse stability of \emph{constrained} s\witch{} systems than \emph{unconstrained} s\witch{} systems
because the entropy of the language of allowed switching signals is smaller.

Constrained switched systems may also be useful to analyse \emph{abstraction techniques} for complex control systems.
Given a nonlinear system, an \emph{abstraction} of the system can be constructed by
a discretization of the state-space,
such abstraction may enhance our ability to analyse the system \cite{tabuada2009verification}.
The entropy of the language of allowed switching signals of the abstraction is related%
\footnote{The entropy of the abstraction with an $\varepsilon$-discretization measures the growth rate of the number
of cells in which the state could be~\cite[Example~6.3.4]{lind1995introduction}
while the topological entropy is the limsup,
with $\varepsilon \to \infty$, of the growth rate with $n$ of the cardinality of
the largest $(n, \varepsilon)$-\emph{separated} (or the smallest $(n, \varepsilon)$-\emph{spanning}) set;
see \cite{bowen1971entropy} for precise definitions.}
to the \emph{topological entropy} of the nonlinear system~\cite{adler1965topological, bowen1971entropy}.
This suggests that the computational complexity of the abstraction is intrinsically related to
the topological entropy of the nonlinear system and not to the specific choice
of discretization, e.g. the value of $\varepsilon$.
In \cite{yu2010kullback}, the authors use the Kullback-Leibler divergence of the uncertainty induced by a model to measure its fidelity.
They measure the entropy of the \emph{uncertainty} of the noise representing the part of the plant that is not accounted for in the model.
This is similar to our work which
measures the entropy of the \emph{uncertainty} induced by an uncontrolled switching
representing the loss of information due to the discretization.
However, it is fundamentally different as we use this entropy to measure the computational complexity
of the model and not the fidelity of the abstraction.
Indeed, as we have seen, in our work this entropy is related to the topological entropy of the plant and
not to the accuracy of the abstraction.
Other appearances of the entropy in systems and control theory include \cite{byrnes2001generalized, ferrante2008hellinger}; see \cite{pavon2013geometry} for an overview.


In \cite{ahmadi2012joint}, Ahmadi and Parrilo show how to reduce the computation of the JSR
of matrices that are all of rank one to a combinatorial problem,
which coincides with the CJSR of $1 \times 1$ matrices (i.e. scalars).
As a final contribution, we generalize
this approach and give a reduction of the computation of the JSR (or CJSR)
of matrices that are all of rank at most $r$ to the computation of the CJSR of $r \times r$ matrices.

The paper is organized as follows.
In \Sect\ref{sec:entropy}, we give the SOS program searching for Lyapunov functions
and we give our new estimate for its accuracy.
The new bounds explicitly depend on the allowable transitions,
through the graph $G(V,E)$.
In \Sect\ref{sec:lowrank}, we give the low rank reduction mentioned above. 

\paragraph{Reproducibility}
The code used to obtain the results is published on codeocean \cite{c2816dff-2bf4-43b8-bb71-f78ab352d9ae}.
The algorithms are part of the SwitchOnSafety Julia~\cite{bezanson2017julia} package \cite{SwitchOnSafety}
which computes invariant sets for hybrid sytems represented with the HybridSystems package~\cite{HybridSystems}.
The implementation relies on the SumOfSquares~\cite{legat2017sos} and
SetProg~\cite{legat2019set} extensions of JuMP~\cite{dunning2017jump}.
The solver used is Mosek v8~\cite{mosek2017mosek81034}.

\section{Stability and entropy}
\label{sec:entropy}
In this section,
we give the SOS-based method to approximate the CJSR,
we define the entropy of a constrained switching signal and the $p$-radius of a constrained switched system
and we show how the performance guarantee of the method is related to the entropy
of the switching signal and the $p$-radius of the s\witch{} system.

\subsection{Stability}
\label{sec:sos}
As introduced in \cite{parrilo2008approximation} and generalized in \cite{philippe2016stability} for the constrained case,
homogeneous\footnote{A \emph{homogeneous} polynomial \emph{of degree} $2d$ is a polynomial for which the degree of each monomial is $2d$.
The polynomial is called homogeneous as for any real number $\lambda$, we have $p(\lambda x) = \lambda^{2d} p(x)$.}
polynomials of degree $2d$ can be used to certify upper bounds on the CJSR.
\begin{myprop}[{\cite[Theorem~1]{legat2016generating}}]
  \label{prop:pos}
  Consider \defAGe{}.
  Suppose that there exist $|\Nodes|$ strictly positive homogeneous polynomials $p_v(x)$ of degree $2d$
  such that
  \( p_{v}(A_\sigma x) \leq \gamub^{2d}p_{u}(x) \)
  holds for all \arc{} $(u,v,\sigma) \in \Arcs$.
  Then $\cjsr \leq \gamub$.
\end{myprop}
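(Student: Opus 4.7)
The plan is to prove the bound by a standard Lyapunov-style composition of the inequalities along admissible paths, combined with an equivalence of norms on finite-dimensional spaces to convert bounds on the polynomials $p_v$ into bounds on the matrix operator norm.

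First I would fix an arbitrary admissible path $s = (\sigma_1, \ldots, \sigma_k) \in \G_k$, corresponding to a walk $v_0 \xrightarrow{\sigma_1} v_1 \xrightarrow{\sigma_2} \cdots \xrightarrow{\sigma_k} v_k$ in $\G$. Applying the hypothesis $p_{v_i}(A_{\sigma_i} y) \leq \gamub^{2d} p_{v_{i-1}}(y)$ iteratively with $y = A_{\sigma_{i-1}} \cdots A_{\sigma_1} x$, I obtain by a straightforward induction on $k$ that
\[
  p_{v_k}(A_s x) \leq \gamub^{2dk}\, p_{v_0}(x) \qquad \text{for every } x \in \R^n.
\]

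Next I would use that each $p_v$ is a strictly positive homogeneous polynomial of degree $2d$. By homogeneity, $p_v$ is determined by its values on the unit sphere $\Sn$; by continuity and compactness of $\Sn$ together with the strict positivity of $p_v$, there exist constants $0 < c_v \leq C_v$ such that $c_v \|x\|^{2d} \leq p_v(x) \leq C_v \|x\|^{2d}$ for all $x \in \R^n$. Setting $c \eqdef \min_v c_v > 0$ and $C \eqdef \max_v C_v$, the displayed inequality gives
\[
  c \, \|A_s x\|^{2d} \leq p_{v_k}(A_s x) \leq \gamub^{2dk}\, p_{v_0}(x) \leq C \gamub^{2dk} \|x\|^{2d},
\]
so that $\|A_s\| \leq (C/c)^{1/(2d)} \gamub^k$. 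Taking the maximum over $s \in \G_k$, extracting the $k$-th root, and passing to the limit $k \to \infty$ yields $\cjsr \leq (C/c)^{1/(2dk)} \gamub \to \gamub$ since $(C/c)^{1/(2dk)} \to 1$, which by \defref{cjsr} is exactly the desired conclusion.

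There is no real obstacle here; the only mildly delicate point is to justify the uniform norm equivalence $c \|x\|^{2d} \leq p_v(x) \leq C_v \|x\|^{2d}$, which relies crucially on the \emph{strict} positivity hypothesis (strict positivity on the nonzero vectors, equivalently on $\Sn$) and on $|\Nodes|$ being finite so that the constants can be chosen uniformly. The proof is essentially a path-composition argument, matching the usual Lyapunov-function proofs for the unconstrained JSR, with the refinement that the Lyapunov function depends on the current state of the automaton and the inequality is required only along the edges of $\G$.
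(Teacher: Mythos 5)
Your proof is correct: the chained edge inequalities give $p_{v_k}(A_s x) \leq \gamub^{2dk} p_{v_0}(x)$ along any admissible path, and the uniform two-sided bounds $c\|x\|^{2d} \leq p_v(x) \leq C\|x\|^{2d}$ (from strict positivity, homogeneity, compactness of the sphere, and finiteness of $\Nodes$) turn this into $\|A_s\| \leq (C/c)^{1/(2d)}\gamub^k$, whence $\cjsr \leq \gamub$ after taking $k$-th roots and letting $k \to \infty$. The paper does not reproduce a proof of this proposition (it is imported as Theorem~1 of the cited reference), but your argument is the standard Lyapunov path-composition proof of exactly this result, with the one genuinely essential point --- the uniform norm-equivalence constants --- correctly identified and justified.
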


We relax the positivity condition of \propref{pos} by the more tractable sum of squares (SOS) condition and
define $\jsrsos$ as the solution of the following sum of squares program.
\begin{myprog}[Primal]
  \label{prog:primal}
  \begin{align}
    \notag
    \inf_{p_v(x) \in \Rh[x],\gamub \in \R} \gamub\\
    \label{eq:soscons2}
    \gamub^{2d} p_u(x) - p_v(A_\sigma x) & \text{ is SOS}, \quad \forall (u, v, \sigma) \in \Arcs,\\
    \label{eq:soscons1}
    p_v(x) & \text{ is SOS}, \quad \forall v \in \Nodes,\\
    \label{eq:soscons3}
    p_v(x) & \text{ is strictly positive}, \quad \forall v \in \Nodes,\\
    \notag
    \sum_{v \in V} \int_{\mathbb{S}^{n-1}} p_v(x) \dif x & = 1.
  \end{align}
\end{myprog}

\begin{myrem}
  In practice we can replace \eqref{eq:soscons1} and \eqref{eq:soscons3} by ``$p_v(x) - \epsilon\|x\|_2^{2d}$ is SOS''
  for any $\epsilon > 0$.
  This constrains $p_v(x)$ to be in the interior of the SOS cone,
  which is sufficient for $p_v(x)$ to be strictly positive.
  The bounds given in \Sect\ref{sec:quality} are valid
  if $p_v(x)$ is in the interior of the SOS cone.
\end{myrem}

\begin{myrem}
  \label{rem:invscaled}
  The constraint~\eqref{eq:soscons2} is equivalent to
  ``$p_u(x) - p_v(A_\sigma x / \gamub)$ is SOS'' hence the 1-sublevel sets of the
  polynomials $p_v$ provide invariant sets for the matrices $A_\sigma/\gamub$
  as claimed in the introduction.
\end{myrem}

By \propref{pos},
a feasible solution of Program~\ref{prog:primal} gives an upper bound for $\cjsr$,
and thus, for any positive degree $2d$,
\begin{equation}
  \label{eq:upperbound}
  \cjsr \leq \jsrsos.
\end{equation}


\begin{myexem}
  \label{exem:simple1}
  Consider the unconstrained system \cite[Example~2.1]{ahmadi2012joint} with $m=3$:
  \( \A = \{ A_1 = e_1e_2^\Tr , A_2 = e_2e_3^\Tr , A_3 = e_3e_1^\Tr  \} \)
  where $e_i$ denotes the $i$th canonical basis vector.
  For any $d$, a solution to Program~\ref{prog:primal} is given by
  \( (p(x), \gamma) = (x_1^{2d} + x_2^{2d} + x_3^{2d}, 1). \)
\end{myexem}

\begin{myexem}
  \label{exem:run2}
  Let us reconsider our running example; see Example~\ref{exem:run1}.
  The optimal solution of Program~\ref{prog:primal}
  is represented by
  \figref{primal} for $2d = 2$, 4, 10 and 12.
  \begin{figure}[!ht]
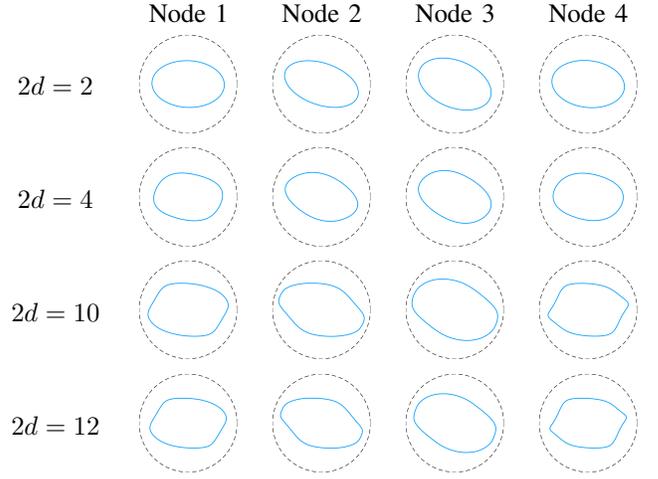

    \centering
    \tabulinesep=0.5mm
    \begin{tabu}{{X[1.0,c,m]X[c,m]X[c,m]X[c,m]X[c,m]}}
      & Node 1 & Node 2 & Node 3 & Node 4\\
      \PrimalFig[0.075]{2}{1}
      \PrimalFig[0.075]{4}{2}
      \PrimalFig[0.075]{10}{5}
      \PrimalFig[0.075]{12}{6}
    \end{tabu}
    \caption[Representation of the solutions to \progref{primal}]{
    Representation of the solutions to \progref{primal}
    with different values of $d$ for the running example.
    The blue curve represents the boundary of the
    1-sublevel set of the optimal
    solution $p_v$ at each node $v \in \Nodes$.
    The dashed curve is the boundary of the unit circle.
    Observe that some sets are not convex.
    Compared to the same experiment done in \cite{legat2016generating},
    the sets represented in this figure correspond to a tigher upper bound
    than the ones reported in \cite[Figure~2]{legat2016generating}.
    This is due to a different implementation of \progref{primal}.
      We use the classical Sum-of-Squares Programming implementation in this paper
      while in \cite[Figure~2]{legat2016generating}, the figure correspond to Common Quadratic Lyapunov Functions (CQLF)
      in a lifted space called the \emph{Veronese embedding}; see \cite[Section~3]{parrilo2008approximation}.
      Using the notation of \cite{parrilo2008approximation},
      the sets of this figure correspond to the upper bound $\rho_{\mathrm{SOS},2d}$ (where SOS stands for Sum-of-Squares)
      and the sets of \cite[Figure~2]{legat2016generating} correspond to the upper bound $\rho_{\mathrm{CQ},2d}$ (where CQ stands for Common Quadratic).
      The inequality $\rho_{\mathrm{SOS},2d} \le \rho_{\mathrm{CQ},2d}$ is proven in \cite[Theorem~5.1]{parrilo2008approximation}.
    }
    \label{fig:primal}
  \end{figure}
\end{myexem}



\subsection{Entropy}
The entropy of a regular language is defined as follows.
\begin{mydef}[{\cite[Definition~4.1.1]{lind1995introduction}}]
    Given a regular language recognized by an automaton $\G$,
    we define the \emph{entropy} of the language as
    \begin{equation}
      \label{eq:hG}
      h(\G) = \lim_{k \to \infty} \frac{1}{k} \log_2|\Paths|.
    \end{equation}
\end{mydef}

The entropy of a language generated by an automaton is easily computable, as we now recall.
The logarithm of the spectral radius of the adjacency matrix of an \emph{irreducible}\footnote{An automaton is \emph{irreducible} if for every pair of nodes $u, v$, there exists a path from $u$ to $v$ accepted by the automaton.} automaton gives the entropy of its \emph{edge shift}.

\begin{mydef}[{\cite[Definition~2.2.5]{lind1995introduction}}]
  \label{def:edgeshift}
  The \emph{edge shift} of an automaton $\G = (\Nodes, \Arcs)$ is the language
  recognized by the automaton $\G' = (\Arcs, \Arcs')$ with the transitions
	$((u,v,\sigma),(v,w,\sigma'),(v,w,\sigma')) \in E'$ for each $(u,v,\sigma), (v,w,\sigma') \in E$.
  We denote the entropy of the edge shift of $\G$ as $h(\Arcs) = h(\G')$.
\end{mydef}

Particularizing equation~\eqref{eq:hG} to the edge shift gives
\begin{equation}
  \label{eq:hE}
  h(\Arcs) = \lim_{k \to \infty} \frac{1}{k} \log_2|\Arcs_k|.
\end{equation}



It turns out that the entropy of the edge shift is equal to the entropy of the language recognized by the automaton if the automaton is \emph{right-resolving}~\cite[Proposition~4.1.13]{lind1995introduction}.

\begin{mydef}[{\cite[Definition~3.3.1]{lind1995introduction}}]
  An automaton $\G$ is \emph{right-resolving} if for every vertex $v$, the outgoing edges have different symbols.
\end{mydef}

Every regular language is recognized by a right-resolving automaton.
Moreover, there are automated ways to obtain such an automaton from a starting representation of a language with an automaton that is not right-resolving \cite[Section~3.3]{lind1995introduction}.

\subsection{Constrained $p$-radius}
The constrained $p$-radius is defined as follows.
\begin{mydef}
  The \emph{constrained $p$-radius} of
  \defAGe{}, denoted as $\cpr$,
  is
  \begin{equation*}
    \cpr =
    \lim_{k \to \infty} \left[|\Arcs_k|^{-1} \sum_{s \in \Arcs_k} \|A_s\|^p\right]^{\frac{1}{pk}}.
  \end{equation*}
  Thus, the CJSR can be defined as the constrained $p$-radius for $p = \infty$.
\end{mydef}

\theoref{pradiusentropy} shows a relation between entropy of the switching signals and the $p$-radius.

\begin{mylem}[{\cite[Corollary~B.5]{legat2017certifying}}]
  \label{lem:summax}
  The limit
  \begin{equation}
    \label{eq:summax}
    \lim_{k \to \infty}
    \left[
      \sum_{s \in \Arcs_k} \|A_s\|^p
    \right]^{\frac{1}{pk}}
  \end{equation}
  converges.
\end{mylem}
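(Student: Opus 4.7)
The plan is to show that the sequence $S_k \eqdef \sum_{s \in \Arcs_k} \|A_s\|^p$ is submultiplicative, i.e. $S_{j+k} \leq S_j S_k$ for all $j, k \geq 1$, and then to invoke Fekete's subadditive lemma. Once submultiplicativity is established, $a_k \eqdef \log S_k$ satisfies $a_{j+k} \leq a_j + a_k$, so by Fekete's lemma the sequence $a_k/k$ converges to $\inf_k a_k/k \in [-\infty, +\infty)$. Exponentiating and raising to the power $1/p$ then yields the desired convergence of $S_k^{1/(pk)}$ to a limit in $[0, \infty)$, which is exactly the claim.

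To establish submultiplicativity, I would use the structure of the paths in the automaton. Every path $s \in \Arcs_{j+k}$ decomposes uniquely as a concatenation $s = (s_1, s_2)$ of a path $s_1 \in \Arcs_j$ followed by a compatible path $s_2 \in \Arcs_k$ (meaning the endpoint of $s_1$ coincides with the startpoint of $s_2$), and the associated matrix product factors as $A_s = A_{s_2} A_{s_1}$. Submultiplicativity of the matrix norm then gives $\|A_s\|^p \leq \|A_{s_1}\|^p \|A_{s_2}\|^p$. Summing over all $s \in \Arcs_{j+k}$ and enlarging the index set from compatible pairs $(s_1, s_2)$ to the full Cartesian product $\Arcs_j \times \Arcs_k$ can only increase the sum, yielding $S_{j+k} \leq S_j S_k$.

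The only real subtlety is the degenerate case in which $S_k = 0$ for some $k$, so that $\log S_k = -\infty$. In that case submultiplicativity forces $S_\ell = 0$ for every $\ell \geq k$ (since each path of length $\ell$ contains a subpath of length $k$, whose vanishing matrix product annihilates the full product), and the sequence $S_\ell^{1/(p\ell)}$ is eventually zero; convergence is then trivial with limit $0$. Otherwise all $S_k$ are positive, $\log S_k$ is finite, and Fekete's lemma applies verbatim.

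The main obstacle, such as it is, lies in cleanly formalizing the path decomposition and the dropping of the compatibility constraint; once that combinatorial bookkeeping is in place the rest is a direct application of Fekete's lemma, which is why I expect the proof to be short.
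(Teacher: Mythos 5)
Your proof is correct and follows the standard submultiplicativity-plus-Fekete argument; note that the paper itself does not prove this lemma but defers to \cite[Corollary~B.5]{legat2017certifying}, whose proof is of exactly this type (decompose each path of length $j+k$ into a prefix of length $j$ and a compatible suffix of length $k$, bound the sum over compatible pairs by the full product $S_j S_k$, and apply Fekete, with the degenerate case $S_k=0$ handled separately as you do). The one detail to make explicit is that you must fix a submultiplicative matrix norm at the outset --- this is without loss of generality, since by equivalence of norms the quantity $\bigl[\sum_{s\in\Arcs_k}\|A_s\|^p\bigr]^{1/(pk)}$ changes by a factor tending to $1$, so convergence for one norm implies convergence (to the same limit) for all.
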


\begin{mytheo}
  \label{theo:pradiusentropy}
  Consider \defAG{}.
  The following relation holds
  \[
    \cpr = 2^{-\expfrac{h(\Arcs)}{p}}
    \lim_{k \to \infty}
    \left[
      \sum_{s \in \Arcs_k} \|A_s\|^p
    \right]^{\frac{1}{pk}}.
  \]
  \begin{proof}
    By \lemref{summax}, \eqref{eq:summax} converges and
    by \eqref{eq:hE}, $\lim_{k \to \infty} |\Arcs_k|^{-\frac{1}{pk}} = 2^{-h(\Arcs)/p}$.
%
  \end{proof}
\end{mytheo}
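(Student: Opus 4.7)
The plan is to start from the definition of the constrained $p$-radius and algebraically split the expression inside the limit into two independent factors, each of which converges on its own. Writing $S_k = \sum_{s \in \Arcs_k} \|A_s\|^p$, we have
\[
  \cpr = \lim_{k \to \infty} \bigl[|\Arcs_k|^{-1} S_k\bigr]^{\frac{1}{pk}}
       = \lim_{k \to \infty} |\Arcs_k|^{-\frac{1}{pk}} \cdot S_k^{\frac{1}{pk}}.
\]
The strategy is to show that the two factors on the right converge individually; once that is done the desired identity follows because the limit of a product is the product of the limits whenever both exist.

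For the first factor, I would invoke equation \eqref{eq:hE}: since $\frac{1}{k}\log_2 |\Arcs_k| \to h(\Arcs)$, dividing by $p$ and exponentiating base $2$ gives $|\Arcs_k|^{\frac{1}{pk}} \to 2^{h(\Arcs)/p}$, and hence $|\Arcs_k|^{-\frac{1}{pk}} \to 2^{-h(\Arcs)/p}$. For the second factor, convergence is exactly the content of \lemref{summax}, which guarantees that $S_k^{\frac{1}{pk}}$ has a (finite) limit; we do not need a closed-form expression for it because the statement of the theorem phrases the right-hand side in terms of this very limit.

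Combining the two pieces yields
\[
  \cpr = 2^{-h(\Arcs)/p} \cdot \lim_{k \to \infty} \Bigl[\sum_{s \in \Arcs_k} \|A_s\|^p\Bigr]^{\frac{1}{pk}},
\]
which is precisely the claim. There is essentially no obstacle here: the only subtle point is justifying the split of the limit into a product of limits, which is valid precisely because \lemref{summax} and \eqref{eq:hE} independently provide the convergence of each factor. I would include a brief remark that the limits are taken in the extended sense if $h(\Arcs)$ happens to be $+\infty$ or if the inner sum grows without bound, but under the standing assumption that $\G$ is strongly connected and $\A$ is finite, both quantities are finite, so no pathology arises.
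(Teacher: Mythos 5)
Your proof is correct and follows exactly the same route as the paper: split the defining limit into the factor $|\Arcs_k|^{-\frac{1}{pk}}$, which converges to $2^{-h(\Arcs)/p}$ by \eqref{eq:hE}, and the factor $\bigl[\sum_{s\in\Arcs_k}\|A_s\|^p\bigr]^{\frac{1}{pk}}$, whose convergence is \lemref{summax}, then take the product of the limits. The paper's proof is just a terser statement of this same argument, so there is nothing to add.
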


\subsection{Performance guarantees}
\label{sec:quality}
In this section, we provide a new bound that relates
the accuracy of Program~\ref{prog:primal} to
the entropy of the switching signal and
the $p$-radius of the s\witch{} system.

An important property of the $p$-radius is that it is increasing in $p$.
\begin{mylem}[{\cite[Lemma~3.7]{legat2017certifying}}]
  \label{lem:incp}
  Consider \defAG{}.
  For any integers $p \leq q$,
  \begin{multline}
    \label{eq:incp}
    \cpr[p] \leq \cpr[q] \leq \cjsr\\
    \leq 2^{\expfrac{h(\Arcs)}{q}} \cpr[q] \leq 2^{\expfrac{h(\Arcs)}{p}} \cpr[p].
  \end{multline}
\end{mylem}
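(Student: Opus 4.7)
The plan is to observe that each of the four inequalities in \eqref{eq:incp} is an incarnation of a standard $\ell^p$-versus-$\ell^q$ comparison applied to the finite sequence $(\|A_s\|)_{s \in \Arcs_k}$, after which we take $k$th roots and let $k \to \infty$, invoking \lemref{summax} to guarantee the limits exist and \theoref{pradiusentropy} to rewrite the unnormalized sums in terms of $\cpr$ and $h(\Arcs)$.

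First I would prove $\cpr[p] \leq \cpr[q]$ using the power-mean (or Jensen) inequality applied to the uniform probability measure that assigns mass $|\Arcs_k|^{-1}$ to each $s \in \Arcs_k$. Since $x \mapsto x^{q/p}$ is convex on $\R_+$ when $p \leq q$, we get
\[
    \left[|\Arcs_k|^{-1}\!\! \sum_{s\in\Arcs_k} \|A_s\|^p\right]^{1/p}
    \le
    \left[|\Arcs_k|^{-1}\!\! \sum_{s\in\Arcs_k} \|A_s\|^q\right]^{1/q}.
\]
Raising to the $1/k$ power and letting $k \to \infty$ yields the first inequality. Next, the bound $\cpr[q] \leq \cjsr$ follows from the elementary fact that an average is at most its maximum, namely $|\Arcs_k|^{-1}\sum_{s} \|A_s\|^q \le \max_{s} \|A_s\|^q$; taking the $(qk)$th root and sending $k \to \infty$ gives the claim.

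For the third inequality $\cjsr \leq 2^{h(\Arcs)/q} \cpr[q]$, I drop the normalization and use $\max_{s} \|A_s\|^q \le \sum_{s} \|A_s\|^q$, which after taking the $(qk)$th root gives
\[
    \max_{s \in \Arcs_k} \|A_s\|^{1/k}
    \le
    \left[\sum_{s \in \Arcs_k} \|A_s\|^q\right]^{1/(qk)}.
\]
Passing to the limit, the left-hand side tends to $\cjsr$ and the right-hand side converges by \lemref{summax} to $2^{h(\Arcs)/q} \cpr[q]$ by \theoref{pradiusentropy}. Finally, the last inequality $2^{h(\Arcs)/q} \cpr[q] \leq 2^{h(\Arcs)/p} \cpr[p]$ uses the standard unnormalized $\ell^p$-$\ell^q$ comparison for finite nonnegative sequences, $\bigl(\sum_s x_s^q\bigr)^{1/q} \le \bigl(\sum_s x_s^p\bigr)^{1/p}$ for $p \le q$, applied to $x_s = \|A_s\|$; taking the $1/k$ power and the limit gives the result via \theoref{pradiusentropy}.

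There is no genuine obstacle here: each step is a one-line inequality followed by a limit argument. The only point that requires some care is ensuring that all the limits in question exist so that the inequalities on finite $k$ transfer to the limit; this is supplied by \lemref{summax} for the unnormalized sums, while the normalized sums inherit convergence via the identity in \theoref{pradiusentropy} together with the definition of $h(\Arcs)$ in \eqref{eq:hE}.
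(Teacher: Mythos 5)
Your proof is correct. Note that the paper itself does not prove \lemref{incp}; it is imported verbatim from \cite[Lemma~3.7]{legat2017certifying}, so there is no in-paper argument to compare against. Your chain of four elementary inequalities (power-mean for $\cpr[p]\le\cpr[q]$, average-vs-max for $\cpr[q]\le\cjsr$, max-vs-sum for $\cjsr\le 2^{\expfrac{h(\Arcs)}{q}}\cpr[q]$, and the unnormalized $\ell^p$--$\ell^q$ comparison for the last step) is the standard route and each step is sound; the appeal to \lemref{summax} and \theoref{pradiusentropy} correctly handles the existence and identification of the limits. The only point worth making explicit is that the CJSR in \defref{cjsr} is a maximum over label sequences $\G_k$ while the $p$-radius sums over paths $\Arcs_k$; since $A_s$ depends only on the labels of $s$ and every admissible label sequence is realized by at least one path, $\max_{s\in\G_k}\|A_s\| = \max_{s\in\Arcs_k}\|A_s\|$, which is what your second and third steps implicitly use.
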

This Lemma is already known in the unconstrained case where $2^{h(\Arcs)} = m$~\cite{zhou2002p}.

\begin{myrem}
  \label{rem:incp}
  \lemref{incp} shows that the $p$-radius provides an upper and lower bound on the CJSR.
  See \cite{blondel2005computationally, parrilo2008approximation} for methods based on the \emph{veronese liftings}
  computing the $2d$-radius either by computing a spectral radius or by solving a linear program
  (see \cite{ogura2016efficient} for computation algorithms when $p$ is not an even integer).
\end{myrem}

We show the following bound stating that the solution found by \progref{primal} is at least as good as the bound obtained by computing the $2d$-radius (see \lemref{incp}).
\begin{mytheo}
  \label{theo:second}
  Consider \defAG{}. For any positive integer $d$,
  the approximation given by \progref{primal} using homogeneous polynomials of degree $2d$
  satisfies:
  \begin{equation}
    \label{eq:second}
    \jsrsos \leq 2^{\expfrac{h(\Arcs)}{2d}}\cdr \leq 2^{\expfrac{h(\Arcs)}{2d}} \cjsr.
  \end{equation}
\end{mytheo}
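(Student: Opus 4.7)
The rightmost inequality $2^{h(\Arcs)/(2d)} \cdr \leq 2^{h(\Arcs)/(2d)} \cjsr$ is immediate from $\cdr \leq \cjsr$, which is part of \lemref{incp}. The main content of the theorem is $\jsrsos \leq 2^{h(\Arcs)/(2d)} \cdr$; the plan is to exhibit, for every $\gamub > 2^{h(\Arcs)/(2d)} \cdr$, an explicit feasible point of \progref{primal}. For each $v \in \Nodes$ the candidate is
\[
  p_v(x) = \sum_{k=0}^{\infty} \gamub^{-2dk} \sum_{t} \|A_t x\|_2^{2d},
\]
where the inner sum ranges over paths $t$ of length $k$ in $\G$ starting at $v$, with the empty path contributing $\|x\|_2^{2d}$.

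Convergence is obtained from \theoref{pradiusentropy}: bounding $\|A_t x\|_2^{2d} \leq \|A_t\|^{2d}\|x\|_2^{2d}$ and extending the inner sum to all of $\Arcs_k$, the $(2dk)$-th root of $\sum_{t \in \Arcs_k} \|A_t\|^{2d}$ tends to $2^{h(\Arcs)/(2d)} \cdr$, which is strictly less than $\gamub$. Hence the $k$-th term is dominated by a convergent geometric series and $p_v$ is a well-defined homogeneous polynomial of degree $2d$. Each summand $\|A_t x\|_2^{2d} = (x^\Tr A_t^\Tr A_t x)^d$ is the $d$-th power of a positive semidefinite quadratic form; a multinomial expansion of $(y_1^2+\cdots+y_r^2)^d$ with $y = A_t x$ writes it explicitly as a sum of squared monomials. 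Since the cone of SOS forms of degree $2d$ in $n$ variables is finite-dimensional and closed, the limit $p_v$ is itself SOS; isolating the $k=0$ term shows $p_v(x) \geq \|x\|_2^{2d}$, so $p_v$ is strictly positive.

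The central combinatorial step establishes the Lyapunov constraint~\eqref{eq:soscons2}. For each edge $(u,v,\sigma) \in \Arcs$, paths of length $k+1$ starting at $u$ whose first edge is $(u,v,\sigma)$ are in bijection with paths $s$ of length $k$ starting at $v$, under the correspondence $A_t = A_s A_\sigma$. Substituting this into $\gamub^{2d} p_u(x)$ and identifying the matching contribution with $p_v(A_\sigma x)$ yields
\[
  \gamub^{2d} p_u(x) - p_v(A_\sigma x) = \gamub^{2d}\|x\|_2^{2d} + \sum_{k=0}^{\infty} \gamub^{-2dk} \sum_{t'} \|A_{t'} x\|_2^{2d},
\]
where $t'$ ranges over paths of length $k+1$ starting at $u$ whose first edge is \emph{not} $(u,v,\sigma)$. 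This is again a convergent sum of SOS polynomials, so the difference is SOS. The normalization $\sum_v \int p_v \, dx = 1$ is enforced by a positive scalar rescaling that preserves every other constraint. Hence any $\gamub > 2^{h(\Arcs)/(2d)} \cdr$ is feasible for \progref{primal}, and letting $\gamub$ descend to $2^{h(\Arcs)/(2d)} \cdr$ gives the theorem.

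I expect the main technical care to lie in two places. First, justifying that $p_v$ is SOS requires invoking closedness of the degree-$2d$ SOS cone, which needs coefficient-wise absolute convergence of the series; the geometric bound above supplies this, but one must check that the convergence is in the fixed finite-dimensional space of homogeneous polynomials of degree $2d$. Second, the path-splitting bijection must be carefully bookkept under the order convention $A_s = A_{\sigma_k} \cdots A_{\sigma_1}$, so that the index shift in the series truly produces $p_v(A_\sigma x)$ term by term. Both are routine once the setup above is fixed; the conceptual heart is the choice of $p_v$ as a weighted sum of squared forward trajectories from $v$.
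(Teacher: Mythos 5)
Your proof is correct and is essentially the paper's argument written in closed form: your candidate $p_v$ is exactly the fixed point of the affine iteration~\eqref{eq:iter} that the paper analyses (take $q_v(x)=\|x\|_2^{2d}$ and $\iterc=\gamub^{2d}$), your path-splitting identity is the series version of the manipulation in \lemref{conv}, and the convergence of the series rests on the same combination of \lemref{summax} and \theoref{pradiusentropy}. The only cosmetic difference is that you sum over paths \emph{leaving} $v$, which matches constraint~\eqref{eq:soscons2} as literally written, whereas the paper's iteration accumulates paths \emph{arriving} at $v$; both conventions yield the same bound since $h(\Arcs)$ and the CJSR are unchanged by reversing the edges.
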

Note that the second inequality in \eqref{eq:second} is simply \eqref{eq:incp}.
\theoref{second} is proven at the end of this section.

We can see with \eqref{eq:second} that if $h(\Arcs) = 0$, the approximation is exact.
This corresponds to the case where every node of $\G$ has indegree and outdegree 1.
In that case, the graph forms a cycle of some length $k$ and the CJSR is simply the $k$th root of the spectral radius of the product of the matrices along this cycle.

For the unconstrained switching case, $2^{h(\Arcs)}$ is equal to the number of matrices $m$.
Theorem~\ref{theo:second} is therefore the generalization of \eqref{eq:mguarantee} to the constrained case. 
A generalization of \eqref{eq:nguarantee} to the constrained case was already known
(note that the bound does not take into account the particular structure of the automaton):
\begin{mytheo}[{\cite[Theorem~3.6]{philippe2016stability}}]
  \label{theo:matt}
  Consider \defAGn{} and a positive integer $d$.
  The approximation $\jsrsos$ given by Program~\ref{prog:primal} using homogeneous polynomials of degree $2d$
  satisfies:
  \[ \jsrsos \leq {n+d-1 \choose d}^{\frac{1}{2d}} \cjsr. \]
\end{mytheo}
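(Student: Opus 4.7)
The plan is to prove the bound by lifting the problem to the $d$-fold Veronese embedding, where homogeneous polynomials of degree $2d$ become quadratic forms, and then invoking a John-ellipsoid argument adapted to the per-node (constrained) setting.

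First, I would introduce the $d$-fold Veronese map $v_d : \R^n \to \R^N$ with $N = \binom{n+d-1}{d}$ and the associated lifted matrices $A_\sigma^{[d]} \in \R^{N \times N}$ satisfying $v_d(A_\sigma x) = A_\sigma^{[d]} v_d(x)$. A standard computation, using that $v_d$ is injective modulo scaling on rays, shows that the constrained joint spectral radius of the lifted system $(\G, \A^{[d]})$ equals $\cjsr^d$. Moreover, any quadratic form $y^\Tr P_v y$ on $\R^N$ pulls back along $v_d$ to a homogeneous polynomial $p_v(x) = v_d(x)^\Tr P_v v_d(x)$ of degree $2d$, and when $P_v \succeq 0$ one has $P_v = Q_v^\Tr Q_v$ so that $p_v(x) = \|Q_v v_d(x)\|_2^2$ is visibly a sum of squares; strict positivity of $p_v$ on $\R^n \setminus \{0\}$ follows from $P_v \succ 0$.

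Second, I would establish a per-node John-ellipsoid bound for constrained systems: for any finite set $\mathcal{B} \subset \R^{N \times N}$ of matrices constrained by an automaton $\G$, there exist positive-definite matrices $P_v \succ 0$ (one per node) such that $B_\sigma^\Tr P_v B_\sigma \preceq N\, \jsra(\G,\mathcal{B})^2\, P_u$ for every edge $(u,v,\sigma)\in\Arcs$. The argument starts from an extremal per-node norm $\|\cdot\|_v$ of the constrained system, whose existence follows from the compactness/irreducibility assumptions on $\G$, and which satisfies $\|B_\sigma x\|_v \leq \jsra(\G,\mathcal{B})\,\|x\|_u$ on each edge. Replacing the unit ball $K_v$ of $\|\cdot\|_v$ by its John ellipsoid $E_v$ introduces a factor of at most $\sqrt{N}$ on each side, so the induced quadratic norms $\|\cdot\|_{E_v}$ satisfy $\|B_\sigma x\|_{E_v} \leq \sqrt{N}\,\jsra(\G,\mathcal{B})\,\|x\|_{E_u}$; squaring yields the claimed matrix inequality with $P_v$ representing $\|\cdot\|_{E_v}^2$.

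Third, I would apply this constrained John bound to the lifted system, whose CJSR is $\cjsr^d$: there exist $P_v \succ 0$ with $(A_\sigma^{[d]})^\Tr P_v\, A_\sigma^{[d]} \preceq N\,\cjsr^{2d}\, P_u$ for every edge. Pulling back along $v_d$, the polynomials $p_v(x) = v_d(x)^\Tr P_v v_d(x)$ are strictly positive SOS homogeneous polynomials of degree $2d$ satisfying $p_v(A_\sigma x) \leq N\,\cjsr^{2d}\, p_u(x)$. After normalizing them so that $\sum_v \int_{\mathbb{S}^{n-1}} p_v\,\dif x = 1$, this is a feasible point of \progref{primal} with $\gamub^{2d} = N\,\cjsr^{2d}$, hence $\jsrsos \leq N^{1/(2d)} \cjsr = \binom{n+d-1}{d}^{1/(2d)} \cjsr$.

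The main obstacle is the second step: in the unconstrained case a single extremal norm and its John ellipsoid suffice, but in the constrained case one needs a family of per-node norms, all extremal with respect to the transitions of $\G$, and one must verify that replacing each unit ball by its John ellipsoid does not spoil the compatibility across edges. The dimension factor $\sqrt{N}$ multiplies only once because John's inequality is applied edge-by-edge (once at the source and once at the target, with cancellation); making this precise is the delicate part of the argument.
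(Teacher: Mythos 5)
You should first note that the paper contains no proof of \theoref{matt}: the result is imported verbatim from \cite[Theorem~3.6]{philippe2016stability}, so the comparison can only be against that reference's argument --- and your proposal reconstructs it faithfully. The Veronese lift with $\jsra(\G,\A^{[d]}) = \cjsr^d$ (via multiplicativity of $A \mapsto A^{[d]}$ and $\|A^{[d]}\|_2 = \|A\|_2^d$), the per-node John ellipsoids applied to a multinorm, and the pullback $p_v(x) = v_d(x)^\Tr P_v v_d(x)$ are exactly the ingredients used there. Your worry that the factor $\sqrt{N}$ might accumulate across edges is unfounded: as your own computation shows, on each edge $(u,v,\sigma)$ one pays $\sqrt{N}$ once at the target ($\|\cdot\|_{E_v} \leq \sqrt{N}\,\|\cdot\|_{K_v}$) and nothing at the source ($\|\cdot\|_{K_u} \leq \|\cdot\|_{E_u}$), so the inequality $\|B_\sigma x\|_{E_v} \leq \sqrt{N}\,\jsra(\G,\mathcal{B})\,\|x\|_{E_u}$ is clean. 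One point you leave implicit but which is immediate from your construction: feasibility in \progref{primal} requires $\gamub^{2d} p_u(x) - p_v(A_\sigma x)$ to be SOS, not merely nonnegative, and this holds because it equals $v_d(x)^\Tr M v_d(x)$ with Gram matrix $M = N \cjsr^{2d} P_u - (A_\sigma^{[d]})^\Tr P_v A_\sigma^{[d]} \succeq 0$.

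The genuine gap sits exactly where you flagged the ``delicate part,'' but it is not the one you name: the existence of an \emph{exactly} extremal multinorm, satisfying $\|B_\sigma x\|_v \leq \jsra(\G,\mathcal{B})\,\|x\|_u$ on every edge, does \emph{not} follow from ``compactness/irreducibility assumptions on $\G$.'' Strong connectivity of the automaton is irrelevant to this: take the unconstrained system with the single matrix $A = \left(\begin{smallmatrix} 1 & 1\\ 0 & 1 \end{smallmatrix}\right)$ (one node, one self-loop, so $\G$ is trivially strongly connected); here $\cjsr = 1$ but $\|A^k\|$ grows linearly, so no norm can satisfy $\|Ax\| \leq \|x\|$ for all $x$ --- the system is \emph{defective}, and the same obstruction persists for genuinely constrained systems. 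As stated, your second step fails. The standard repair, and the one used in \cite{philippe2016stability}, is to work with $\varepsilon$-extremal multinorms: for every $\varepsilon > 0$ there exist per-node norms with $\|B_\sigma x\|_v \leq \left(\jsra(\G,\mathcal{B}) + \varepsilon\right)\|x\|_u$ on each edge (such multinorms always exist, e.g.\ by a limsup-type construction along the paths of $\G$, with no nondefectiveness hypothesis). Running your John-ellipsoid and pullback steps verbatim at level $\jsra + \varepsilon$ yields $\jsrsos \leq N^{\frac{1}{2d}}\bigl(\cjsr^d + \varepsilon\bigr)^{\frac{1}{d}}$, and letting $\varepsilon \to 0^+$ --- legitimate because $\jsrsos$ is defined as an infimum --- recovers the stated bound. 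With that substitution your argument is complete and coincides, in substance, with the proof of the cited reference.
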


The results of \theoref{second}, \theoref{matt} and \eqref{eq:upperbound} are summarized by the following corollary.
\begin{mycoro}
  \label{coro:mix}
  Consider \defAGn{} and a positive integer $d$,
  the approximation given by Program~\ref{prog:primal} using homogeneous polynomials of degree $2d$
  satisfies:
  \begin{multline*}
    \max\Big\{{n+d-1 \choose d}^{-\frac{1}{2d}}, 2^{-\expfrac{h(\Arcs)}{2d}}\Big\} \jsrsos\\
    \leq \cjsr \leq \jsrsos.
  \end{multline*}
\end{mycoro}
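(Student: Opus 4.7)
The plan is that this corollary is essentially a bookkeeping combination of the three results cited in its statement, and requires no genuinely new argument. I would organize the proof in two short blocks corresponding to the two inequalities in the chain.

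First, for the right-hand inequality $\cjsr \leq \jsrsos$, I would simply invoke~\eqref{eq:upperbound}, which in turn follows from \propref{pos} applied to any feasible solution of \progref{primal}. Since \progref{primal} is feasible (for instance, any strictly positive multiple of $\|x\|_2^{2d}$ scaled appropriately yields a feasible point), the infimum $\jsrsos$ is a valid upper bound on the CJSR.

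Second, for the left-hand inequality, I would apply both \theoref{second} and \theoref{matt} separately. \theoref{second} yields $\jsrsos \leq 2^{h(\Arcs)/2d}\,\cjsr$, which, after dividing both sides by the positive quantity $2^{h(\Arcs)/2d}$, becomes $2^{-h(\Arcs)/2d}\,\jsrsos \leq \cjsr$. Similarly, \theoref{matt} yields $\jsrsos \leq {n+d-1 \choose d}^{1/2d}\,\cjsr$, which rearranges to ${n+d-1 \choose d}^{-1/2d}\,\jsrsos \leq \cjsr$. Taking the maximum of the two lower bounds on $\cjsr$ gives precisely the claimed
\[
\max\Big\{{n+d-1 \choose d}^{-\frac{1}{2d}},\, 2^{-\expfrac{h(\Arcs)}{2d}}\Big\}\,\jsrsos \leq \cjsr.
\]

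There is no real obstacle here: the content of the corollary is entirely in the two underlying theorems, and the only thing to check is that both bounds legitimately apply to the same quantity $\jsrsos$ under the common hypotheses ($\A \subset \R^{n \times n}$ constrained by an automaton $\G$, degree $2d$), which is clear from inspection of the statements of \theoref{second} and \theoref{matt}. I would therefore present the proof as a single paragraph noting that it is immediate from combining the two theorems with~\eqref{eq:upperbound}.
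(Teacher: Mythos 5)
Your proposal is correct and matches the paper exactly: the paper presents \cororef{mix} as an immediate summary of \theoref{second}, \theoref{matt} and \eqref{eq:upperbound}, with the left-hand bound obtained precisely by the rearrangements you describe and the right-hand bound being \eqref{eq:upperbound}. No further comment is needed.
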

We see that we can have arbitrary accuracy by increasing $d$.

Our proof technique for \theoref{second} relies on the analysis of an iteration in the vector space of polynomials of degree $2d$.
When this iteration converges, it converges to a feasible solution of Program~\ref{prog:primal}.
By analysing this iteration as affine iterations in this vector space, we derive a sufficient condition for its convergence
and thus an upper bound for $\jsrsos$.

Consider the iteration
\begin{align}
  \notag
  p_{v,0}(x) & = 0,\\
  \label{eq:iter}
  p_{v,k+1}(x) & = q_v(x) + \frac{1}{\iterc} 
  \sum_{(u,v,\sigma) \in \Arcs} p_{u,k}(A_\sigma x), \quad  v \in \Nodes
\end{align}
for fixed homogeneous polynomials $q_v(x)$ of degree $2d$ in $n$ variables (not necessarily different)
and a constant $\iterc > 0$.

When this iteration converges, it converges to a feasible solution of Program~\ref{prog:primal}.
\begin{mylem}
  \label{lem:conv}
  Consider a constant $\iterc > 0$.
  If there exist homogeneous polynomials $q_v(x)$ in the interior of the SOS cone such that
  iteration~\eqref{eq:iter} converges then
  \(
    \jsrsos \leq \iterc^{\frac{1}{2d}}.
  \)
  \begin{proof}
    Suppose the iteration converges to the polynomials $p_{v,\infty}(x)$.
    It is easy to show by induction that $p_{v,k}(x)$ is SOS for all $k$.
    It is trivial for $k = 0$ and if it is true for $k$ then it is also true for $k+1$ by \eqref{eq:iter}.
    Since the SOS cone is closed, $p_{v,\infty}$ is SOS.
    Now by \eqref{eq:iter}, for each $v \in \Nodes$,
    \[ p_{v,\infty}(x) = q_v(x) + \frac{1}{\iterc} 
    \sum_{(u,v,\sigma) \in \Arcs} p_{u,\infty}(A_\sigma x) \]
    so $p_{v,\infty}(x)$ is also in the interior of the SOS cone.
    For each edge $(u,v,\sigma)$, by manipulating the above equation, we have
    \[ \iterc p_{v,\infty}(x) - p_{u,\infty}(A_\sigma x) = \iterc q_v(x) + 
    \sum_{\substack{(u',v,\sigma') \in \Arcs,\\(u', \sigma') \neq (u, \sigma)}} p_{u',\infty}(A_{\sigma'} x) \]
    so $\iterc p_{v,\infty}(x) - p_{u,\infty}(A_\sigma x)$ is SOS.
    Therefore $(\{\, p_{v,\infty}(x) : v \in \Nodes \,\}, \iterc^{\frac{1}{2d}})$ is a feasible solution of Program~\ref{prog:primal}.
  \end{proof}
\end{mylem}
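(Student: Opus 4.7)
The plan is to exhibit a feasible point of \progref{primal} with objective value $\iterc^{1/(2d)}$; by definition of $\jsrsos$ this immediately yields the claimed bound. The natural candidate is the fixed point $p_{v,\infty}(x)$ of the iteration~\eqref{eq:iter}, which exists by hypothesis and satisfies
\[
\iterc\, p_{v,\infty}(x) = \iterc\, q_v(x) + \sum_{(u,v,\sigma)\in\Arcs} p_{u,\infty}(A_\sigma x).
\]

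First, I would verify by induction on $k$ that each iterate $p_{v,k}$ is SOS. The base case $p_{v,0}=0$ is trivial, and the inductive step combines the hypothesis that $q_v$ is SOS with two basic closure properties of the SOS cone: closure under the linear substitution $x \mapsto A_\sigma x$ and closure under nonnegative linear combinations. Taking $k \to \infty$ and invoking closedness of the SOS cone, the limit $p_{v,\infty}$ is SOS as well.

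Second, I would verify the remaining constraints of \progref{primal} with $\gamub^{2d}=\iterc$. Strict positivity~\eqref{eq:soscons3} comes essentially for free: the fixed-point equation presents $p_{v,\infty}$ as the sum of the interior-SOS polynomial $q_v$ and SOS terms, forcing $p_{v,\infty}$ to lie in the interior of the SOS cone. For the edge condition~\eqref{eq:soscons2}, isolating the term corresponding to a given edge in the fixed-point equation produces an identity of the form
\[
\iterc\, p_{v,\infty}(x) - p_{u,\infty}(A_\sigma x) = \iterc\, q_v(x) + \sum_{\substack{(u',v,\sigma') \in \Arcs \\ (u',\sigma') \neq (u,\sigma)}} p_{u',\infty}(A_{\sigma'} x),
\]
which is visibly a sum of SOS polynomials. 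The normalization $\sum_{v\in V} \int_{\Sn} p_v \, \dif x = 1$ can then be imposed by a harmless positive rescaling, since every other constraint is homogeneous in the $p_v$'s.

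The only subtle point is that \eqref{eq:soscons3} requires the $p_{v,\infty}$ to lie in the \emph{interior} of the SOS cone rather than merely on its boundary, which is precisely why the hypothesis strengthens $q_v$ from SOS to interior-SOS; this strictness then propagates to $p_{v,\infty}$ through the fixed-point equation. Once this is established, $(\{p_{v,\infty}\}_{v \in V}, \iterc^{1/(2d)})$ is feasible for \progref{primal}, and the bound on $\jsrsos$ follows.
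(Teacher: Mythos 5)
Your proposal is correct and follows essentially the same route as the paper's own proof: exhibit the fixed point $p_{v,\infty}$ as a feasible solution of \progref{primal} with $\gamub^{2d} = \iterc$, using induction plus closedness of the SOS cone for membership, the fixed-point equation for interiority and for the edge constraints. Your explicit remark that the normalization constraint can be met by a positive rescaling is a small point the paper's proof leaves implicit, but the argument is otherwise identical.
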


In view of \lemref{conv}, it is thus natural to analyse
under which condition iteration~\eqref{eq:iter} converges.

\begin{proof}[Proof of Theorem~\ref{theo:second}]
  Iteration~\eqref{eq:iter} is an affine map on the vector space of
  homogeneous polynomials of degree $2d$.  It is well known that
  if the convergence is guaranteed when we only retain the linear
  part of the affine map then it is also guaranteed for the affine
  iteration.

  Therefore we can analyse instead the following iteration
  \begin{align*}
    p_{v,0}(x) & = q_v(x),\\
    p_{v,k+1}(x) & = \frac{1}{\iterc} 
    \sum_{(u,v,\sigma) \in \Arcs} p_{u,k}(A_\sigma x), \quad  v \in \Nodes
  \end{align*}

  We can see that
  \begin{equation*}
    p_{v,k}(x) = \frac{1}{\iterc^k}\sum_{s \in \Arcsin_k(v)} q_{s(1)}(A_s x)
  \end{equation*}
  where $s(1)$ denotes the first node of the path $s$.

  Consider a norm $\|\cdot\|$ of $\R^n$ and its corresponding induced matrix norm of $\R^{n \times n}$.
  For each $v \in \Nodes$, we know by continuity of $q_v(x)$ that there exist $\beta_v > 0$ such that
  \( q_v(x) \leq \beta_v\|x\|^{2d} \)
  for all $x \in \R^n$.
  Let $\beta = \max_{v \in \Nodes} \beta_v$,
  then
  \begin{align*}
    p_{v,k}(x)
    & \leq \frac{1}{\iterc^k}\sum_{s \in \Arcsin_k(v)} \beta_{s(1)} \|A_s\|^{2d} \|x\|^{2d}\\
    & \leq \frac{\beta}{\iterc^k} \|x\|^{2d} \sum_{s \in \Arcsin_k(v)} \|A_s\|^{2d}\\
    \sum_{v \in \Nodes} p_{v,k}(x)
    & \leq \frac{\beta}{\iterc^k} \|x\|^{2d} \sum_{s \in \Arcs_k} \|A_s\|^{2d}
  \end{align*}
  By \theoref{pradiusentropy}, if $\iterc > 2^{h(\Arcs)}\cdr^{2d}$, then
  $\lim_{k \to \infty} \sum_{v \in \Nodes} p_{v,k}(x) = 0$
  hence $\lim_{k \to \infty} p_{v,k}(x) = 0$ $\forall v \in \Nodes$ since the polynomials $p_{v,k}$ belong to a proper cone.
  We obtain the result by \lemref{conv}.
\end{proof}

\subsection{Improving the automaton-dependent bounds}

If strong duality holds for a convex problem,
its feasibility is equivalent to the non-existence of an \emph{infeasibility certificate} (see \cite[Section~5.8]{boyd2004convex}).
An infeasibility certificate contains one entry per constraint and if this entry is zero for a given constraint
then the infeasibility certificate remains valid if the constraint is removed from the problem.
In this section, we show how this fact allows to improve the guarantee given by \theoref{second} using the sparsity of the infeasibility certificate.

We show in \cite[Lemma~A.1]{legat2017certifying} that strong duality holds for \progref{primal} with a fixed $\gamub$.
This allows \progref{primal} to be solved by binary search on $\gamub$:
Given a fixed value $\gamma$, the problem is solved with $\gamub = \gamma$;
if a feasible solution is found, it means that $\gamubopt \leq \gamma$,
otherwise, an infeasibility certificate
is found showing that $\gamubopt \geq \gamma$.
By \cororef{mix}, an infeasibility certificate for $\gamma$ provides the following
lower bound certificate on the CJSR:
\[
  \max\Big\{{n+d-1 \choose d}^{-\frac{1}{2d}}, 2^{-\expfrac{h(\Arcs)}{2d}}\Big\} \gamma
  \leq \cjsr.
\]

In \theoref{fifth}
we show a simple way to improve this lower bound certificate
by inspecting the sparsity of the infeasibility certificate.


\begin{mydef}
  Consider \defAGe{}.
  Given an infeasibility certificate $\pmu$ of \progref{primal},
  we denote by \(\Arcs_{\pmu}\) the set of \arcs{} $e \in \Arcs$ such that
  the entry of $\pmu$ corresponding to constraint \eqref{eq:soscons2} with \arc{} $e$ is nonzero.
\end{mydef}

\begin{mytheo}
  \label{theo:fifth}
  Consider \defAGe{}. For any positive integer $d$,
  if there exists an infeasibility certificate $\pmu$ of \progref{primal} with $\gamub = \gamma$
  then
  \begin{equation}
    \label{eq:rmze}
    2^{-\expfrac{h(\Arcs_{\pmu})}{2d}} \gamma \leq \cjsr.
  \end{equation}
  \begin{proof}
    We consider the graph $\G_{\pmu}(\Nodes,\Arcs_{\pmu})$.
    Since the infeasibility certificate $\pmu$ is zero for constraints \eqref{eq:soscons2} with \arcs{} $e \in \Arcs \setminus \Arcs_{\pmu}$,
    $\pmu$ remains a valid infeasibility certificate for \progref{primal} with input $(\G_{\pmu}, \A)$ and $\gamub = \gamma$,
    hence
    \( \gamma \leq \jsrsossub{2d}{\G_{\pmu}}{\A}. \)
    By \theoref{second}, \( 2^{-\expfrac{h(\Arcs_{\pmu})}{2d}} \jsrsossub{2d}{\G_{\pmu}}{\A} \leq \cjsr[\G_{\pmu}] \)
    and since $\Arcs_{\pmu} \subseteq \Arcs$, $\cjsr[\G_{\pmu}] \leq \cjsr$.
    We obtain \eqref{eq:rmze} by combining these three inequalities.
  \end{proof}
\end{mytheo}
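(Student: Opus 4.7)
The plan is to observe that Theorem~\ref{theo:second} applied naively to the full automaton $\G$ only yields the weaker bound $2^{-\expfrac{h(\Arcs)}{2d}}\gamma \leq \cjsr$, and to improve this by restricting attention to the sub-automaton $\G_{\pmu}(\Nodes,\Arcs_{\pmu})$ carved out by the support of the infeasibility certificate. Since $h(\Arcs_{\pmu}) \leq h(\Arcs)$, such a restriction, if valid, automatically gives a tighter bound of exactly the desired form.

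The first step is to argue that $\pmu$ remains an infeasibility certificate for \progref{primal} with input $(\G_{\pmu},\A)$ and threshold $\gamma$. This is the crux: in a conic program, a certificate of infeasibility is a particular linear combination of the constraint expressions weighted by the entries of the dual witness. A zero entry means the corresponding constraint contributes nothing to the combination, so omitting that constraint altogether leaves the certificate intact. Here the \arcs{} in $\Arcs \setminus \Arcs_{\pmu}$ correspond to zero entries of $\pmu$ on constraints \eqref{eq:soscons2}, so deleting them from the automaton does not disturb the certificate. This yields $\gamma \leq \jsrsossub{2d}{\G_{\pmu}}{\A}$.

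The second step is to apply \theoref{second} to the smaller system on $\G_{\pmu}$, which gives $\jsrsossub{2d}{\G_{\pmu}}{\A} \leq 2^{\expfrac{h(\Arcs_{\pmu})}{2d}} \cjsr[\G_{\pmu}]$. The third step is a monotonicity observation: paths admissible in $\G_{\pmu}$ form a subset of those admissible in $\G$, so taking the limit in \defref{cjsr} gives $\cjsr[\G_{\pmu}] \leq \cjsr$. Chaining the three inequalities produces \eqref{eq:rmze}.

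The main obstacle is the first step: the zero-entry deletion argument presumes that the object called ``infeasibility certificate'' really witnesses infeasibility in a way that decomposes cleanly along the constraints, which in turn requires strong duality for \progref{primal} with fixed $\gamub$. The authors separately establish this strong duality in \cite[Lemma~A.1]{legat2017certifying}; with that ingredient in hand, the deletion step becomes essentially bookkeeping, and the remaining two steps follow immediately from already-established results.
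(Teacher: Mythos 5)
Your proposal is correct and follows essentially the same route as the paper's own proof: delete the \arcs{} where the certificate entries vanish, apply \theoref{second} to the sub-automaton $\G_{\pmu}$, and use monotonicity of the CJSR under edge deletion. Your added remark that the deletion step rests on strong duality for \progref{primal} with fixed $\gamub$ (established in \cite[Lemma~A.1]{legat2017certifying}) matches the justification the authors give in the surrounding discussion rather than in the proof itself, and is a fair point to make explicit.
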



\begin{myexem}
  \label{exem:run5}
  Applying the result of this section to the running example gives the result of \figref{run5}.
  The ``Kronecker lift'' lower bound is the bound obtained by using the \emph{Kronecker lift} to transform
  the constrained system with 9 \arcs{} into an unconstrained system with 9 matrices, one per \arc{}.
  The upper bound obtained with both systems is the same \cite[Proposition~3.9]{philippe2016stability}
  hence we can use the guarantee for unconstrained systems \eqref{eq:mguarantee} with $m'=|\Arcs|=9$ for the constrained system.

  The entropy of the switching signal $h(\Arcs)$ used in \theoref{second} is $\log_2(2.61803)$,
  while the value ${n+d-1 \choose d}$ used in \theoref{matt} is $d+1$ since $n=2$.
  Therefore, as we can see on the figure, the lower bound guaranteed by \theoref{matt} is more accurate for $d=1$ only.
  The entropy $h(\Arcs_{\pmu})$ used in \theoref{fifth} is $\log_2(1.61803)$ for $d = 1, 2$ and $\log_2(1.83929)$ for $d = 3, 4, 5, 6$,
  it is more accurate than the three other lower bounds for every $d$.

  The lower bound obtained by computing the $2d$-radius is the most accurate one among all lower bounds for the same $d$ for this example.
  In practice, better lower bounds can be obtained from the solution of \progref{primal} using the techniques of \cite{legat2016generating, legat2017certifying}.

  \begin{figure}[!ht]
    \centering
    \includegraphics[width=0.45\textwidth]{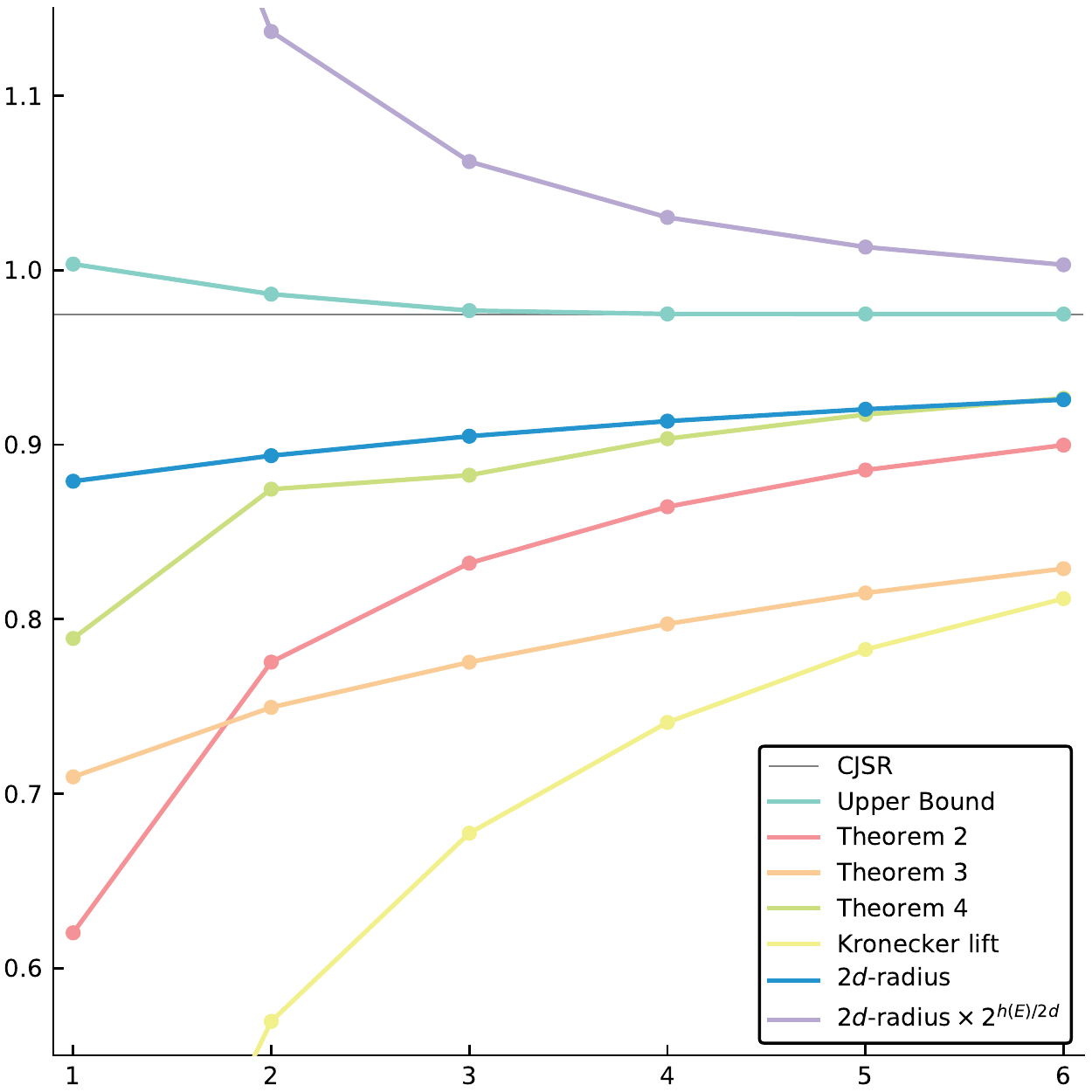}
    \caption{Result of \exemref{run5} for $d = 1, 2, 3, 4, 5, 6$;
      the value of $d$ is given in the horizontal axis.
      The exact value of the CJSR found in \cite{legat2016generating} is represented by the horizontal line.
      The upper and lower bounds given by \lemref{incp} using the $2d$-radius are denoted ``$2d$-radius''.
      The upper bound found by \progref{primal} with polynomials of degree $2d$ is denoted ``Upper Bound''.
      From this upper bound, three lower bounds can be obtained using \theoref{second}, \theoref{matt} and \theoref{fifth}.
      A fourth lower bound can be obtained using the ``Kronecker lift'' as explained in \exemref{run5}.
      }
    \label{fig:run5}
  \end{figure}
\end{myexem}

\section{Low rank reduction}
\label{sec:lowrank}
Suppose we want to compute the CJSR of a finite set of matrices $\A \eqdef \{A_1, \ldots, A_m\} \subset \mathbb{R}^{n \times n}$ of rank at most $r$ constrained by an automaton $\G(V,E)$.
For $\sigma = 1, \ldots, m$, since the matrix $A_\sigma$ has rank at most $r$, there exists $X_\sigma, Y_\sigma \in \mathbb{R}^{n \times r}$ such that $A_\sigma = X_\sigma Y_\sigma^\Tr$.
This can be used to build a new system with matrices of $\R^{r \times r}$ with the same CJSR.
This new system can therefore be used to reduce the computation of the CJSR of a system of low rank matrices to a system of matrices of small size.
Note that in the case $r=1$, it is known that the CJSR is computable in polynomial time~\cite{ahmadi2012joint}.

\begin{mytheo}[Low Rank Reduction]
  \label{theo:lowrank}
  Consider a finite set of matrices $\A \eqdef \{A_1, \ldots, A_m\} \subset \mathbb{R}^{n \times n}$ of rank at most $r$
  constrained by an automaton $\G(V,E)$.

  For a fixed decomposition $A_\sigma = X_\sigma Y_\sigma^T$ for $\sigma = 1, \ldots, m$ where $X_\sigma, Y_\sigma \in \R^{n \times r}$,
  denote the set of matrices $\A' \eqdef \{A_{\sigma_1\sigma_2}' \mid \sigma_1, \sigma_2 = 1, \ldots, m\} \subset \mathbb{R}^{r \times r}$ where $A_{\sigma_1\sigma_2}' = Y_{\sigma_1}^TX_{\sigma_2}$.
  Define the graph $\G'(V',E')$ with $V' \eqdef E$ and
  \[
    E' \eqdef \{\, ((u,v,\sigma_1), (v,w,\sigma_2), \sigma_2\sigma_1) \mid (u,v,\sigma_1),(v,w,\sigma_2) \in E \,\}.
  \]
  Then the two CJSR are the same:
  \( \cjsr = \cjsrp. \)
  \begin{proof}
    As the CJSR does not depend on the norm used, we choose a norm $\|\cdot\|$ that is \emph{submultiplicative},
    that is $\|AB\| \leq \|A\| \|B\|$ for all matrices $A,B$.

    Let $\beta = \max_{\sigma=1}^m\max\{\|X_\sigma\|, \|Y_\sigma^T\|\}$.
    If $\beta = 0$, then $\cjsr = 0 = \cjsrp$.
    Therefore we may assume that $\beta > 0$.
    Consider a positive integer $k$.
    We first show that $[\cjsrk]^k \leq \beta^2[\cjsrkp[k-1]]^{k-1}$
    where $\cjsrk$ is defined in \eqref{eq:cjsrk}.
    For any $\G$-admissible $(\sigma_1, \sigma_2, \ldots, \sigma_k)$, we have
    \begin{equation*}
      \label{eq:redu}
      A_{\sigma_k} \cdots A_{\sigma_2}A_{\sigma_1} = X_{\sigma_k}A_{\sigma_k\sigma_{k-1}}' \cdots A_{\sigma_3\sigma_2}' A_{\sigma_2\sigma_1}'Y_{\sigma_1}^T.
    \end{equation*}
    using
    the submultiplicativity of the norm chosen, we have
    \begin{align*}
      \|A_{\sigma_k} \cdots A_{\sigma_1}\|
      & \leq \|X_{\sigma_k}\| \cdot \|A_{\sigma_k\sigma_{k-1}}' \cdots A_{\sigma_3\sigma_2}' A_{\sigma_2\sigma_1}'\| \cdot \|Y_{\sigma_1}^T\|\\
      & \leq \beta^2\|A_{\sigma_k\sigma_{k-1}}' \cdots A_{\sigma_3\sigma_2}' A_{\sigma_2\sigma_1}'\|\\
      & \leq \beta^2 [\cjsrkp[k-1]]^{k-1}.
    \end{align*}

    The same way, we now show that $[\cjsrkp[k-1]]^{k-1} \leq \beta^2 [\cjsrk[k-2]]^{k-2}$.
    For any $\G'$-admissible $(\sigma_2\sigma_1, \ldots, \sigma_k\sigma_{k-1})$, we have
    \begin{align*}
      \|A_{\sigma_k\sigma_{k-1}}' \cdots A_{\sigma_3\sigma_2}' A_{\sigma_2\sigma_1}'\|
      & \leq \|Y_k^T\| \cdot \|A_{\sigma_{k-1}} \cdots A_{\sigma_2}\| \cdot \|X_1\|\\
      & \leq \beta^2 [\cjsrk[k-2]]^{k-2}.
    \end{align*}

    In summary, we have
    \begin{align*}
      \cjsrk
      & \leq \beta^{\frac{2}{k}} [\cjsrkp[k-1]]^{\frac{k-1}{k}}\\
      & \leq \beta^{\frac{4}{k}} [\cjsrk[k-2]]^{\frac{k-2}{k}}.
    \end{align*}
    Taking the limit $k \to \infty$ we get $\cjsr \leq \cjsrp \leq \cjsr$.
  \end{proof}
\end{mytheo}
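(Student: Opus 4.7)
The plan is to exploit the rank-$r$ factorisation $A_\sigma = X_\sigma Y_\sigma^T$ to telescope a length-$k$ product in $\A$ into a length-$(k-1)$ product in $\A'$, sandwiched between one $X$ factor on the left and one $Y^T$ factor on the right. The key algebraic identity is
\[ A_{\sigma_k} \cdots A_{\sigma_1} = X_{\sigma_k}\, A'_{\sigma_k\sigma_{k-1}} \cdots A'_{\sigma_2\sigma_1}\, Y_{\sigma_1}^T, \]
which follows by induction on $k$ from $Y_{\sigma_j}^T X_{\sigma_{j-1}} = A'_{\sigma_j\sigma_{j-1}}$. First I would verify this identity and check that the edge-shift-style construction of $\G'$ (cf.\ \defref{edgeshift}) puts $\G$-admissible length-$k$ sequences $(\sigma_1, \ldots, \sigma_k)$ into exact bijection with $\G'$-admissible length-$(k-1)$ sequences $(\sigma_2\sigma_1, \sigma_3\sigma_2, \ldots, \sigma_k\sigma_{k-1})$.

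Next I would select a submultiplicative matrix norm on both $\R^{n \times n}$ and $\R^{r \times r}$ (which is legitimate since the CJSR is norm-independent) and set $\beta = \max_\sigma \max\{\|X_\sigma\|, \|Y_\sigma^T\|\}$. If $\beta = 0$, then every $A_\sigma$ and every $A'_{\sigma_1\sigma_2}$ vanishes and both radii are trivially zero, so I may assume $\beta > 0$. Submultiplicativity combined with the telescoping identity gives
\[ \|A_{\sigma_k} \cdots A_{\sigma_1}\| \le \beta^2\, \|A'_{\sigma_k\sigma_{k-1}} \cdots A'_{\sigma_2\sigma_1}\|, \]
and maximising over $\G$-admissible sequences yields $[\cjsrk]^k \le \beta^2\, [\cjsrkp[k-1]]^{k-1}$. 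For the reverse direction I would re-bracket the same product the other way, $A'_{\sigma_k\sigma_{k-1}} \cdots A'_{\sigma_2\sigma_1} = Y_{\sigma_k}^T (X_{\sigma_{k-1}} Y_{\sigma_{k-1}}^T) \cdots (X_{\sigma_2} Y_{\sigma_2}^T) X_{\sigma_1} = Y_{\sigma_k}^T A_{\sigma_{k-1}} \cdots A_{\sigma_2} X_{\sigma_1}$, which gives $[\cjsrkp[k-1]]^{k-1} \le \beta^2\, [\cjsrk[k-2]]^{k-2}$.

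Finally I would chain these two inequalities and take $k \to \infty$. Since $\beta^{2/k} \to 1$ and the exponents $(k-1)/k$ and $(k-2)/k$ both tend to $1$, the quantities $\cjsrk$ and $\cjsrkp[k-1]$ are squeezed to a common limit, giving $\cjsr = \cjsrp$. The step I expect to be most delicate is the graph-theoretic bookkeeping: one must verify that vertices of $\G'$ are precisely the edges of $\G$, and that a $\G'$-edge from $(u,v,\sigma_1)$ to $(v,w,\sigma_2)$ carries the label $\sigma_2\sigma_1$, so that maxima over $\G$-admissible length-$k$ paths and $\G'$-admissible length-$(k-1)$ paths line up bijectively with the norm inequalities. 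The algebraic inequalities themselves are then immediate, but without this bijection the maxima on the two sides would be taken over incomparable index sets.
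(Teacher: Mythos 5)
Your proposal is correct and follows essentially the same route as the paper's proof: the same telescoping identity $A_{\sigma_k}\cdots A_{\sigma_1} = X_{\sigma_k}A'_{\sigma_k\sigma_{k-1}}\cdots A'_{\sigma_2\sigma_1}Y_{\sigma_1}^T$, the same submultiplicative-norm bound via $\beta$, the same pair of inequalities $[\cjsrk]^k \le \beta^2[\cjsrkp[k-1]]^{k-1}$ and $[\cjsrkp[k-1]]^{k-1} \le \beta^2[\cjsrk[k-2]]^{k-2}$, and the same squeeze as $k \to \infty$. The only difference is that you make explicit the path correspondence between $\G$ and $\G'$, which the paper leaves implicit.
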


\begin{myexem}
  Consider an unconstrained switched system with 2 rank $r$ matrices $A_1, A_2$.
  This system is equivalent to the constrained switched system with automaton represented in \figref{redu1}.
  Its low rank reduction is represented in \figref{redu2}.
  \begin{figure}[!ht]
    \begin{subfigure}{0.17\textwidth}
      \centering
      \begin{tikzpicture}
        \SetGraphUnit{3}
        \GraphInit[vstyle=Dijkstra]
        \SetUpEdge[style={->}]
        \Vertex{0}
        \Loop[labelstyle={above=2pt,fill=white},dist=1.5cm,dir=NO,label={$A_1 = X_1Y_1^T$}](0)
        \Loop[labelstyle={below=2pt,fill=white},dist=1.5cm,dir=SO,label={$A_2 = X_2Y_2^T$}](0)
      \end{tikzpicture}
      \caption{Automaton $\G$.
      We
      have $V = \{0\}$ and $E = \{(0,0,1),(0,0,2)\}$.}
      \label{fig:redu1}
    \end{subfigure}
    \begin{subfigure}{0.31\textwidth}
      \centering
      \begin{tikzpicture}
        \SetGraphUnit{3}
        \SetUpEdge[style={->}]
        \tikzset{EdgeStyle/.style = {distance = 30}}
        \Vertex[NoLabel=false]{1}
        \EA[NoLabel=false](1){2}
        \tikzset{EdgeStyle/.append style = {bend right=60}}
        \Loop[labelstyle={above=0pt},dist=1.5cm,dir=EA,label={$A_{11}' = Y_1^TX_1$}](1)
        \Loop[labelstyle={above=0pt},dist=1.5cm,dir=EA,label={$A_{22}' = Y_2^TX_2$}](2)
        \Edge[labelstyle={above=4pt, fill opacity=0, text opacity=1},label={$A_{21}' = Y_2^\Tr X_1$}](1)(2)
        \Edge[labelstyle={below=4pt, fill opacity=0, text opacity=1},label={$A_{12}' = Y_1^\Tr X_2$}](2)(1)
      \end{tikzpicture}
      \caption{Automaton $\G'$.
      We have $V' \allowbreak= \allowbreak\{1,2\}$ and
      $E' = \{(1,1,11),\allowbreak (1,2,21),\allowbreak (2,1,12),\allowbreak (2,2,22)\}$.}
      \label{fig:redu2}
    \end{subfigure}
    \caption{Simple example of the low rank reduction.}
    \label{fig:redu}
  \end{figure}
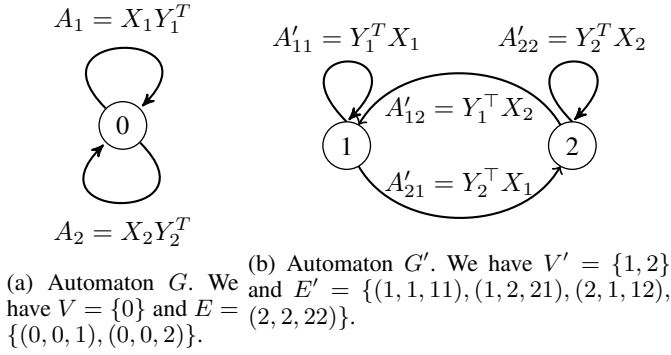
\end{myexem}

\begin{myrem}
  The matrices $X_\sigma, Y_\sigma$ of the factorization $A_\sigma = X_\sigma Y_\sigma^T$ are not unique.
  For any invertible matrix $S \in \mathbb{R}^{r \times r}$, $A_\sigma = (X_\sigma S)(S^{-1}Y_\sigma^T)$ also gives a factorization.
  However, if $\cjsrp$ is approximated using the sum of squares algorithm of \secref{sos},
  any two factorizations will give the same approximation.
  The effect of using $X_\sigma S$ and $Y_\sigma S^{-T}$ instead of $X_\sigma$ and $Y_\sigma$ will simply be a linear change of variable of the polynomial $p_\sigma$;
  see \secref{sos}.

\end{myrem}

What is the impact of this reduction on the computational complexity and accuracy of the approximation ?
The entropy of the language of allowed switching signals is the same for the initial system and the reduced system hence the guarantee in \theoref{second} is the same for both systems.
However, the dimension of the matrices goes from the dimension of the matrices $n$ to their rank $r$
hence for low rank matrices the guarantee in \theoref{matt} is improved.

In terms of computational complexity, there can be up to $m$ \nodes{} and $m^2$ \arcs{} in the automaton of the reduced system.
Therefore, even if the size of the matrices decreases from $n$ to $r$, the number of variables and constraints increases.
This shows that the reduction only decreases the computational complexity if the rank of the matrices is \emph{sufficiently} low.

\section{Conclusion}
This paper uncovers a first relation between the complexity of the discrete dynamic of a hybrid system and
the computational performance of convex optimization methods analysing the stability of its continuous dynamic.
The analysis is performed on discrete linear s\witch{} systems, a subclass of hybrid systems,
but we believe that it should be extended to other classes of hybrid systems such as markovian s\witch{} systems
where the entropy of the discrete dynamics is influenced by transition probabilities.


%


%
%

\ifCLASSOPTIONcaptionsoff
  \newpage
\fi



\bibliographystyle{IEEEtranDOI}
\bibliography{IEEEabrv,2018_TAC-paper}

\begin{thebibliography}{10}
\providecommand{\url}[1]{#1}
\csname url@samestyle\endcsname
\providecommand{\newblock}{\relax}
\providecommand{\bibinfo}[2]{#2}
\providecommand{\BIBentrySTDinterwordspacing}{\spaceskip=0pt\relax}
\providecommand{\BIBentryALTinterwordstretchfactor}{4}
\providecommand{\BIBentryALTinterwordspacing}{\spaceskip=\fontdimen2\font plus
\BIBentryALTinterwordstretchfactor\fontdimen3\font minus
  \fontdimen4\font\relax}
\providecommand{\BIBforeignlanguage}[2]{{%
\expandafter\ifx\csname l@#1\endcsname\relax
\typeout{** WARNING: IEEEtran.bst: No hyphenation pattern has been}%
\typeout{** loaded for the language `#1'. Using the pattern for}%
\typeout{** the default language instead.}%
\else
\language=\csname l@#1\endcsname
\fi
#2}}
\providecommand{\BIBdecl}{\relax}
\BIBdecl

\bibitem{blondel2000boundedness}
V.~D. Blondel and J.~N. Tsitsiklis, ``{T}he boundedness of all products of a
  pair of matrices is undecidable,'' \emph{Systems \& Control Letters},
  vol.~41, no.~2, pp. 135--140, 2000.

\bibitem{rota1960note}
G.-C. Rota and W.~Strang, ``{A} note on the joint spectral radius,''
  \emph{Proceedings of the Netherlands Academy}, 1960, 22:379--381.

\bibitem{gomes2017stable}
C.~Gomes, B.~Legat, R.~M. Jungers, and H.~Vangheluwe, ``Stable adaptive
  co-simulation: A switched systems approach,'' in \emph{IUTAM Symposium on
  Co-Simulation and Solver Coupling}, no.~1, Darmstadt, Germany, 2017, p. to
  appear.

\bibitem{jungers2009joint}
R.~Jungers, \emph{{T}he joint spectral radius: theory and applications}.\hskip
  1em plus 0.5em minus 0.4em\relax Springer Science \& Business Media, 2009,
  vol. 385.

\bibitem{dai2012gel}
X.~Dai, ``{A} {G}el'fand-type spectral radius formula and stability of linear
  constrained switching systems,'' \emph{Linear Algebra and its Applications},
  vol. 436, no.~5, pp. 1099--1113, 2012.

\bibitem{brockett2000quantized}
R.~W. Brockett and D.~Liberzon, ``Quantized feedback stabilization of linear
  systems,'' \emph{IEEE transactions on Automatic Control}, vol.~45, no.~7, pp.
  1279--1289, 2000.

\bibitem{zhang2005new}
L.~Zhang, Y.~Shi, T.~Chen, and B.~Huang, ``A new method for stabilization of
  networked control systems with random delays,'' \emph{IEEE Transactions on
  automatic control}, vol.~50, no.~8, pp. 1177--1181, 2005.

\bibitem{jadbabaie2003coordination}
A.~Jadbabaie, J.~Lin \emph{et~al.}, ``{C}oordination of groups of mobile
  autonomous agents using nearest neighbor rules,'' \emph{IEEE Transactions
  onAutomatic Control}, vol.~48, no.~6, pp. 988--1001, 2003.

\bibitem{ahmadi2014joint}
A.~A. Ahmadi, R.~M. Jungers, P.~A. Parrilo, and M.~Roozbehani, ``{J}oint
  spectral radius and path-complete graph {L}yapunov functions,'' \emph{SIAM
  Journal on Control and Optimization}, vol.~52, no.~1, pp. 687--717, 2014.

\bibitem{gomes2018minimally}
C.~Gomes, R.~M. Jungers, B.~Legat, and H.~Vangheluwe, ``Minimally constrained
  stable switched systems and application to co-simulation,'' in \emph{57th
  IEEE Conference on Decision and Control}.\hskip 1em plus 0.5em minus
  0.4em\relax IEEE, 2018.

\bibitem{philippe2016stability}
M.~Philippe, R.~Essick, G.~E. Dullerud, and R.~M. Jungers, ``Stability of
  discrete-time switching systems with constrained switching sequences,''
  \emph{Automatica}, vol.~72, pp. 242--250, 2016.

\bibitem{parrilo2008approximation}
P.~A. Parrilo and A.~Jadbabaie, ``{A}pproximation of the joint spectral radius
  using sum of squares,'' \emph{Linear Algebra and its Applications}, vol. 428,
  no.~10, pp. 2385--2402, 2008.

\bibitem{tabuada2009verification}
P.~Tabuada, \emph{Verification and control of hybrid systems: a symbolic
  approach}.\hskip 1em plus 0.5em minus 0.4em\relax Springer Science \&
  Business Media, 2009.

\bibitem{lind1995introduction}
D.~Lind and B.~Marcus, \emph{An introduction to symbolic dynamics and
  coding}.\hskip 1em plus 0.5em minus 0.4em\relax Cambridge university press,
  1995.

\bibitem{bowen1971entropy}
R.~Bowen, ``Entropy for group endomorphisms and homogeneous spaces,''
  \emph{Transactions of the American Mathematical Society}, vol. 153, pp.
  401--414, 1971.

\bibitem{adler1965topological}
R.~L. Adler, A.~G. Konheim, and M.~H. Konheim, ``Topological entropy,''
  \emph{Transactions of the American Mathematical Society}, vol. 114, no.~2,
  pp. 309--319, 1965.

\bibitem{yu2010kullback}
S.~Yu and P.~G. Mehta, ``The kullback-leibler rate pseudo-metric for comparing
  dynamical systems,'' \emph{IEEE Transactions on Automatic Control}, vol.~55,
  no.~7, pp. 1585--1598, 2010.

\bibitem{byrnes2001generalized}
C.~I. Byrnes, T.~T. Georgiou, and A.~Lindquist, ``A generalized entropy
  criterion for nevanlinna-pick interpolation with degree constraint,''
  \emph{IEEE Transactions on Automatic Control}, vol.~46, no.~6, pp. 822--839,
  2001.

\bibitem{ferrante2008hellinger}
A.~Ferrante, M.~Pavon, and F.~Ramponi, ``Hellinger versus kullback--leibler
  multivariable spectrum approximation,'' \emph{IEEE Transactions on Automatic
  Control}, vol.~53, no.~4, pp. 954--967, 2008.

\bibitem{pavon2013geometry}
\BIBentryALTinterwordspacing
M.~Pavon and A.~Ferrante, ``\href{https://doi.org/10.1137/120862843}{On the
  geometry of maximum entropy problems},'' \emph{SIAM Review}, vol.~55, no.~3,
  pp. 415--439, 2013. [Online]. Available:
  \url{https://doi.org/10.1137/120862843}
\BIBentrySTDinterwordspacing

\bibitem{ahmadi2012joint}
A.~A. Ahmadi and P.~A. Parrilo, ``{J}oint spectral radius of rank one matrices
  and the maximum cycle mean problem.'' in \emph{CDC}, 2012, pp. 731--733.

\bibitem{c2816dff-2bf4-43b8-bb71-f78ab352d9ae}
B.~Legat, P.~A. Parrilo, and R.~M. Jungers,
  ``\href{https://doi.org/https://doi.org/10.24433/CO.0452244.v1}{An
  entropy-based bound for the computational complexity of a switched system},''
  \url{https://www.codeocean.com/}, June 2019.

\bibitem{bezanson2017julia}
J.~Bezanson, A.~Edelman, S.~Karpinski, and V.~B. Shah, ``Julia: A fresh
  approach to numerical computing,'' \emph{SIAM review}, vol.~59, no.~1, pp.
  65--98, 2017.

\bibitem{SwitchOnSafety}
\BIBentryALTinterwordspacing
B.~Legat and C.~Gomes,
  ``\href{https://doi.org/10.5281/zenodo.1246104}{blegat/{S}witch{O}n{S}afety.jl:
  v0.0.1},'' May 2019. [Online]. Available:
  \url{https://doi.org/10.5281/zenodo.1246104}
\BIBentrySTDinterwordspacing

\bibitem{HybridSystems}
\BIBentryALTinterwordspacing
B.~Legat, M.~Forets, and C.~Schilling,
  ``\href{https://doi.org/10.5281/zenodo.1246104}{blegat/{H}ybrid{S}ystems.jl:
  v0.3.0},'' May 2019. [Online]. Available:
  \url{https://doi.org/10.5281/zenodo.1246104}
\BIBentrySTDinterwordspacing

\bibitem{legat2017sos}
B.~Legat, C.~Coey, R.~Deits, J.~Huchette, and A.~Perry, ``{Sum-of-squares
  optimization in Julia},'' in \emph{The First Annual JuMP-dev Workshop}, 2017.

\bibitem{legat2019set}
B.~Legat, R.~M. Jungers, P.~A. Parrilo, and P.~Tabuada, ``{Set Programming with
  JuMP},'' in \emph{The Third Annual JuMP-dev Workshop}, 2019.

\bibitem{dunning2017jump}
I.~Dunning, J.~Huchette, and M.~Lubin, ``{JuMP}: A modeling language for
  mathematical optimization,'' \emph{SIAM Review}, vol.~59, no.~2, pp.
  295--320, 2017.

\bibitem{mosek2017mosek81034}
M.~ApS, ``Mosek optimization suite release 8.1.0.67,'' URL:
  \url{http://docs.mosek.com/8.1/intro.pdf}, 2017.

\bibitem{legat2016generating}
\BIBentryALTinterwordspacing
B.~Legat, R.~M. Jungers, and P.~A. Parrilo,
  ``\href{https://doi.org/10.1145/2883817.2883821}{{G}enerating unstable
  trajectories for {S}witched {S}ystems via {D}ual {S}um-{O}f-{S}quares
  techniques},'' in \emph{Proceedings of the 19th International Conference on
  Hybrid Systems: Computation and Control}, ser. HSCC '16.\hskip 1em plus 0.5em
  minus 0.4em\relax ACM, 2016, pp. 51--60. [Online]. Available:
  \url{http://doi.acm.org/10.1145/2883817.2883821}
\BIBentrySTDinterwordspacing

\bibitem{legat2017certifying}
B.~{Legat}, P.~A. {Parrilo}, and R.~M. {Jungers}, ``{Certifying unstability of
  Switched Systems using Sum of Squares Programming},'' \emph{ArXiv e-prints},
  Oct. 2017.

\bibitem{zhou2002p}
D.-X. Zhou, ``{T}he p-norm joint spectral radius and its applications in
  wavelet analysis,'' \emph{AMS IP Studies in Advanced Mathematics}, vol.~25,
  pp. 305--326, 2002.

\bibitem{blondel2005computationally}
V.~D. Blondel and Y.~Nesterov, ``{C}omputationally efficient approximations of
  the joint spectral radius,'' \emph{SIAM Journal on Matrix Analysis and
  Applications}, vol.~27, no.~1, pp. 256--272, 2005.

\bibitem{ogura2016efficient}
M.~Ogura, V.~M. Preciado, and R.~M. Jungers, ``Efficient method for computing
  lower bounds on the p-radius of switched linear systems,'' \emph{Systems \&
  Control Letters}, vol.~94, pp. 159--164, 2016.

\bibitem{boyd2004convex}
S.~Boyd and L.~Vandenberghe, \emph{{C}onvex optimization}.\hskip 1em plus 0.5em
  minus 0.4em\relax Cambridge university press, 2004.

\end{thebibliography}
%


\end{document}